\documentclass[a4paper]{article}

\usepackage{amsfonts,amsmath,amsthm,amssymb}
\usepackage{natbib}

\RequirePackage{xcolor}
\definecolor[named]{ACMBlue}{cmyk}{1,0.1,0,0.1}
\definecolor[named]{ACMYellow}{cmyk}{0,0.16,1,0}
\definecolor[named]{ACMOrange}{cmyk}{0,0.42,1,0.01}
\definecolor[named]{ACMRed}{cmyk}{0,0.90,0.86,0}
\definecolor[named]{ACMLightBlue}{cmyk}{0.49,0.01,0,0}
\definecolor[named]{ACMGreen}{cmyk}{0.20,0,1,0.19}
\definecolor[named]{ACMPurple}{cmyk}{0.55,1,0,0.15}
\definecolor[named]{ACMDarkBlue}{cmyk}{1,0.58,0,0.21}

\newtheorem{theorem}{Theorem}[section]
\newtheorem{corollary}{Corollary}[theorem]
\newtheorem{lemma}[theorem]{Lemma}
\newtheorem{proposition}[theorem]{Proposition}
\theoremstyle{definition}
\newtheorem{definition}[theorem]{Definition}
\theoremstyle{remark}
\newtheorem{example}[theorem]{Example}

\usepackage{hyperref}
\hypersetup{colorlinks,
    linkcolor=ACMBlue,
    citecolor=ACMPurple,
    urlcolor=ACMLightBlue,
    filecolor=ACMDarkBlue}

\hypersetup{pdflang={en}, pdfdisplaydoctitle}

\newcommand{\sset}[1]{\ensuremath{\mathbb{#1}}} 
\newcommand{\zz}{\sset{Z}} 
\newcommand{\nn}{\sset{N}} 
\newcommand{\cc}{\sset{C}} 

\newcommand{\cA}{\mathcal{A}}
\newcommand{\cB}{\mathcal{B}}
\newcommand{\cD}{\mathcal{D}}
\newcommand{\cG}{\mathcal{G}}
\newcommand{\cM}{\mathcal{M}}
\newcommand{\cO}{\mathcal{O}}
\newcommand{\dres}{\partial{\rm Res}}

\DeclareMathOperator{\ord}{ord} 
\DeclareMathOperator{\lc}{lc} 

\newcommand{\kk}{\sset{K}} 

\newcommand{\Const}{\mathbf{C}} 
\newcommand{\centr}{\mathcal{Z}} 
\DeclareMathOperator{\bc}{BC} 
\def\Spec{{\rm Spec}}

\newcommand{\<}{\overline{<}}

\usepackage[linesnumbered,ruled,vlined]{algorithm2e}
\SetKwRepeat{Do}{do}{while}
\SetKw{True}{\texttt{True}}
\SetKw{False}{\texttt{False}}
\SetKwInOut{Input}{Input}
\SetKwInOut{Output}{Output}
\SetKwComment{Comment}{// }{}

\title{Gröbner bases of Burchnall-Chaundy ideals for ordinary differential operators}
\author{Antonio Jiménez-Pastor\\Universidad Politécnica de Madrid\\ Applied Mathematics Department, ETSAM\\Madrid, Spain \and Sonia L. Rueda\\Universidad Politécnica de Madrid\\ Applied Mathematics Department, ETSAM\\Madrid, Spain
} 

\begin{document}

\maketitle

\begin{abstract}
The correspondence between commutative rings of ordinary differential operators (ODOs) and algebraic curves was established by Burchnall and Chaundy, Krichever and Mumford, among many others. To make this correspondence computationally effective, in this paper we aim to compute the defining ideals of spectral curves, Burchnall-Chaundy (BC) ideals. We provide an algorithm to compute a Gröbner basis of a BC ideal. The point of departure is the computation of the finite set of generators of a maximal commutative ring of ODOs, which was implemented by the authors in the package \texttt{dalgebra} of SageMath. The algorithm to compute BC ideals has been also implemented in \texttt{dalgebra}. 

 The differential Galois theory of the corresponding spectral problems, linear differential equations with parameters, would benefit from the computation on this prime ideal, generated by constant coefficient polynomials. In particular, we prove the primality of the differential ideal generated by a BC ideal, after extending the coefficient field. This is a fundamental result to develop Picard-Vessiot theory for spectral problems.

\end{abstract}

\section{Introduction}\label{sec:introduction}

This paper is devoted to the computation of the {\it spectral curve} $\Spec (\cA)$ of a commutative domain of ordinary differential operators $\cA$. In particular, given an ordinary differential operator $L$ with coefficients in a differential field $(\kk,\partial)$, the centralizer $\centr(L)$ of $L$ in the ring of differential operators $\cD=\kk [\partial]$ is a maximal commutative subring of $\cD$ and, in particular, a commutative domain. We will compute the defining ideal of the spectral curve $$\Gamma=\Spec(\centr(L)).$$

These algebraic curves are tied to the theory of commuting differential operators since the visionary work of Burchnall and Chaundy~\cite{BC1}. The classification of all commutative algebras of ODOs has been studied by many authors including Krichever~\cite{K77bis,K78} and Mumford~\cite{Mum2}. Important works~\cite{BZ,GD,Mulase,PW,Rueda2025,Schilling,SW,Ve}, contributed to a bijective correspondence between the set of equivalence classes of commutative algebras $\cA$ of ordinary differential operators with a monic element, (the algebras $\cA$ and $\sigma \cA \sigma^{-1}$ are identified for an invertible function $\sigma$), and the set of equivalence classes of isomorphism classes of {\it quintets} consisting of an algebraic curve and on it a point, a vector bundle, a local covering and a local trivialization.

Spectral curves are used to produce solutions of non-linear partial differential equations~\cite{K77bis,K78,Wilson1985},  solutions of the Gelfand-Dickey (GD) hierarchies~\cite{GD,JimenezPastor2025a}. On the other hand, spectral curves are fundamental objects in studying the Picard-Vessiot theory of spectral problems $(L-\lambda)(y)=0$, for a generic parameter $\lambda$ and a differential operator $L$, whose coefficients are solutions of one of the equations of the GD hierarchies~\cite{BEG,MRZ2,MRZ1,Rueda2024}. This condition is equivalent to having a non-trivial centralizer $\centr(L)$, i.e., to containing an operator $A$ which is not a polynomial in $L$ with constant coefficients. 

Given the field of constants $\Const$ of $\kk$, the centralizer of $L$ contains $\Const[L,A]$ whose spectral curve is defined by $f(\lambda,\mu)=0$, for a constant coefficient polynomial $f$ that can be computed as the square free part of the differential resultant $\dres(L-\lambda, A-\mu)$. The spectral curve of $\Const[L,A]$ is the algebraic curve $\Gamma_{L,A}$ whose defining ideal is the prime ideal $(f)$, see~\cite{Rueda2025,Rueda2024,Zheglov}, and $\Const[L,A]$ is isomorphic to the coordinate ring 
\begin{equation}
    \Const[\Gamma_{L,A}]:=\frac{\Const[\lambda,\mu]}{(f)}.
\end{equation}
Moreover $f$ is a Burchnall-Chaundy polynomial, since $f(L,A)=0$.  

In general, commutative domains in $\cD$ have more than two generators. For instance, the generic situation for ODOs of order 3 in $\cD$ will be that the centralizer $\centr (L)=C[L,A_1,A_2]$, having 3 generators, that reduce to two generators only in very specific cases~\cite{Rueda2025,Rueda2024}. The defining ideal of the spectral curve of the centralizer was computed in~\cite{Rueda2024} using differential resultants but we will not use differential resultants in this paper, since this result does not generalize to operators of order greater than $3$.

The centralizer $\centr(L)$ is a $\Const[L]$-module generated by a finite basis $\{1$, $G_1$,$\ldots,$ $G_{t-1}\}$ \cite{Goodearl1983}, that provides a set of generators as a $\Const$-algebra
\begin{equation}
    \centr(L)=\Const[G_0:=L,G_1,\ldots ,G_{t-1}].
\end{equation}
These facts are reviewed in Section \ref{sec:centralizer} and will allow us to describe the spectral curve $ \Gamma$ algorithmically, controlling the defining ideal of this (in principle) abstract algebraic curve.
In this paper we compute a Gröbner basis of the defining  ideal of $\Gamma$ and prove that it is the ideal of all Burchnall-Chaundy polynomials, those vanishing in the set of generators of $\centr (L)$. As in~\cite{Rueda2025,Rueda2024} we call it the Burchnall-Chaundy (BC) ideal of $L$ and denote it $\bc (L)$, establishing an explicit isomorphism between $\centr (L)$ and the coordinate ring of the spectral curve 
\begin{equation}
    \Const [\Gamma]:=\frac{\Const [\lambda, \mu_1,\ldots ,\mu_{t-1}]}{\bc (L)}.
\end{equation}
In~\cite{JimenezPastor2025b} we implemented an algorithm, in the \texttt{dalgebra} package of SageMath~\cite{Zenodo2025}, that allows to compute a $\Const[L]$-basis of the centralizer $\centr(L)$. In this paper, we present an algorithm to compute the algebraic relations between the $\Const[L]$-module generators. The algorithm has also been implemented in \texttt{dalgebra} and ultimately computes a Gröbner basis of $\bc(L)$. We define BC ideals and spectral curves in Section \ref{subsec:bc}. The main results of this paper are included in Section \ref{sec:result}.

Since $\centr (L)\simeq \Const [\Gamma]$ is a domain, the BC ideal is a prime ideal in $\Const [\lambda, \overline{\mu}]$. 
A Picard-Vessiot theory for the spectral problem $(L-\lambda)(y)=0$ needs a coefficient field containing $\kk$ and $C[\Gamma]$~\cite{BEG,MRZ2,Rueda2025}. To allow effective calculations, one considers the differential ring  
\begin{equation}
    \kk [\Gamma]:=\frac{\kk [\lambda, \mu_1,\ldots ,\mu_{t-1}]}{[\bc (L)]}.
\end{equation}
As a first application of the Gr\" obner basis of $\bc (L)$ we prove that the differential ideal $[\bc (L)]$ generated in $\kk [\lambda, \mu_1,\ldots ,\mu_{t-1}]$ by $\bc (L)$ is also a prime ideal in Section \ref{sec:primality}. This result allows to consider $\kk (\Gamma)$, the fraction field of the domain $\kk [\Gamma]$ to develop this Picard-Vessiot theory. Conclusions and further research are included in Section \ref{sec:problems}.

\section{Basic definitions}\label{sec:definitions}

This section focuses on basic definitions and classical results that will be necessary to understand this paper, 
since  this work mixes the world of differential operators, and their centralizers, with algebraic geometry and commutative algebra.


    {\bf Commutative algebra.} Let us begin with some concepts from commutative algebra, for further references we refer to~\cite{Atiyah1969,Bourbaki1998,Lang2002}. 
    Let $\Const$ be a field  and consider a finitely generated $\Const$-algebra $\cA$. 
    There exists $a_1,\ldots,a_n \in \cA$ such that every element in $\cA$ can be written as a polynomial in $a_1,\ldots,a_n$ with coefficients in $\Const$. These finitely generated algebras are domains if they are isomorphic to quotients of a polynomial ring by a prime ideal. If the generators of $\cA$ are known, the isomorphism $\phi$ can be made explicit, as an evaluation map sending  an algebraic variable $y_i$ to each generator $a_i$ so that $\frac{\Const[y_1,\ldots,y_n]}{\ker(\phi)}$ is isomorphic to $\cA=\Const[a_1, \ldots,a_n ]$. The prime ideal $\ker(\phi)$ describes the algebraic relations between these generators of $\cA$.
    
    As it is well known,  Gröbner bases are one of the main tools of symbolic computation to handle polynomial ideals. We refer to~\cite{Buchberger1998,Cox1996} for definitions and basic results on monomial orders but we include next the characterization of Gröbner basis that will be used in this paper. 
    
    Let us fix a monomial order $<$ in the polynomial ring $\Const[Y]$ generated by the set of variables $Y=\{y_1,\ldots ,y_n\}$. Given $p\in \Const[Y]$, let us denote by $\deg_Y(p)$ the total degree of $p$ w.r.t. the variables in $Y$, by
    $\ell (p)$ its leading monomial w.r.t. the monomial ordering $<$, and by $\lc(p)$ its leading coefficient. Given a subset $S$ of polynomials in $\Const[Y]$, denote by $\ell(S)$ the ideal in $\Const [Y]$ generated by the leading monomials of $S$.

    Let $\mathcal{B} = \{f_1,\ldots, f_m\}$ be a set of generators for an ideal $I \subseteq \Const[Y]$. The \emph{leading ideal} of $I$ is $\ell (I)$.
    
    \begin{definition}[{\cite[Def. 2.5.5]{Cox1996}}]\label{def:groebner}
        The set $\mathcal{B} $ is a \emph{Gröbner basis} of the ideal $I=(\cB)$ if the leading ideal of $I$ is generated by the leading monomials of $\mathcal{B}$, that is $\ell(I)=\ell(\cB)$. 
    \end{definition}

    By the Division Theorem (see \cite{Cox1996}, Theorem 3, page 64), we can write $p\in \Const [Y]$ as $p =q +\tilde{p}$, where $q\in (\cB)$ and $\tilde{p}$ is the normal form of $p$ w.r.t. $\cB$.
    It is well known~\citep{Buchberger1998,Cox1996} that  $\mathcal{B}$ being a Gröbner basis is equivalent to a polynomial $p\in \Const[Y]$ reducing to $0$ w.r.t $\mathcal{B}$, $\tilde{p}=0$, if and only if $p \in I$.


    {\bf Differential fields.} We refer to~\cite{Kolchin1973,Ritt1950} for further references to basic facts on differential algebra. Let $(\kk,\partial)$ be a differential field, i.e., a field $\kk$ with a derivation $\partial$, an additive homomorphism $\partial: \kk \rightarrow \kk$ that satisfies Leibniz rule ($\partial(ab) = \partial(a)b + a\partial(b)$). The set of constants, elements $c\in \kk$ with $\partial(c) = 0$, is a subfield $\Const$ of $\kk$, its field of constants.Throughout this paper, we will assume that $\Const$ is algebraically closed and has zero characteristic.

    A differential extension $\mathbb{F} \supset \kk$ is a field extension where the derivation of $\mathbb{F}$ extends the derivation over $\kk$. In these extensions it is clear that the field of constants $\Const$ may be also extended. For many algorithms the field of constants are necessary, so these extensions are built keeping the same field of constants. For example, strongly normal extensions add the solutions to a non-linear differential equations while preserving the constants (see~\cite[Section VI.3]{Kolchin1973} or~\cite{Kumbhakar2024,Kumbhakar2025}), or monomial extensions add a new transcendental element with prescribed derivation with conditions for preserving the constants~\cite{Bronstein1997}.

    Throughout this paper, we will use the next differential fields to illustrate our results with examples: the monomial extension $\cc(e^x)$ of the field of complex numbers $\cc$ and the strongly normal extension $\cc\langle \wp \rangle=\cc(\wp,\wp')$ defined by the Weierstrass $\wp$ function $\wp=\wp(x;g_2,g_3)$ where $\wp'^2 = 4\wp^3 - g_2\wp - g_3$ for some constants $g_2,g_3\in \cc$ such that $g_2^3-27g_3^2\neq0$. In both cases the derivation is $\partial=d/dx$.

{\bf Differential operators.} 
The ring of differential operators $\kk [\partial]$ is a (left and
right) non commutative Euclidean domain, see for instance
\cite{BGV}. A differential operator $P\in \kk [\partial]$ can be written in a unique way as
\[P = a_n \partial^n + a_{n-1}\partial^{n-1}+\ldots +a_1\partial+a_0, \mbox{ with } a_i\in \kk,\,\,\,a_n\neq 0.\]
The \emph{order} of $P$ is $n$, denoted  $\ord(P)$,  with the convention $\ord (0)=-\infty$. The coefficient of the highest order term of~$P$ is the \emph{leading coefficient} of $P$, denoted $\lc(P)=a_n$. We say that an operator~$P\in \kk [\partial]$ of order $n$ is in \emph{normal form} if $\lc(P)=1$ and the coefficient of the term of order $n-1$ is zero.
In  \cite{Mu2, Zheglov} these operators are called {\it elliptic}. Differential operators can always be brought to normal form, by using a change of variable and conjugation by a function.

    \section{Centralizers}\label{sec:centralizer}

    Consider a differential operator $L \in \kk[\partial]$ of order $n$ and in normal form
    \[L=\partial^n + u_{n-2}\partial^{n-2} + \ldots + u_1\partial + u_0.\]
The centralizer of $L$ in $\Sigma [\partial]$ is 
\[\centr (L)=\{A\in \Sigma[\partial]\mid [L,A]=0\}.\]

 It is well known \cite{Wilson1985} that $L$ has a non-trivial centralizer, i.e., $\centr(L) \neq \Const[L]$, if the coefficients $u_2,\ldots,u_n$ satisfy the equations of one of the systems of the Gelfand-Dickey hierarchies, which for $n=2$ is the Korteweg-de Vries hierarchy and for $n=3$ is the Boussinesq hierarchy. In~\cite[Section~5]{JimenezPastor2025a}, the authors designed an algorithm to explicitly compute the equations of these hierarchies, saving the results in a repository for a wide range of values for $n$. In ~\cite[Section~8]{JimenezPastor2025b}, a method was designed to compute specific solutions $u_0,u_1,\ldots ,u_{n-2}$ of GD-hierarchies in a chosen strongly normal extension $\Const \langle \eta \rangle$, transforming systems of non-linear differential equations into systems of polynomial equations. The examples of ODOs with non-trivial centralizer used in this paper were computed using these techniques.
 
 In this paper, we will assume that an operator $L$ with a non-trivial centralizer is given. Moreover, we will assume that a set of generators of $\centr (L)$ is already known. In~\cite{JimenezPastor2025b}, the authors proposed an algorithm to compute $\centr(L)$. The starting point of this algorithm is the certainty of having a non-trivial centralizer, in other words, to be given the order of a differential operator in $\centr(L)\setminus \Const [L]$. In this situation, one can always compute the \emph{level} of $\centr(L)$, that is, the minimal order $M$ of the elements in $\centr(L)\setminus \Const[L]$. Given an operator $L$ of $\kk [\partial]$ in normal form and the level $M$, the algorithm computes a basis of $\centr(L)$ as a $\Const[L]$-module. 
 The algorithm in \cite{JimenezPastor2025b} is based on~\cite[Theorem 1.2]{Goodearl1983}, that we review next. See also~\cite[Theorem~3.7]{JimenezPastor2025b} for further details. Let us denote by $\zz_n$ the cyclic group of classes $(\text{mod } {n})$ and by $[m]_n$ the class of $m\in \zz \pmod{n}$. 
    
    \begin{theorem}[Goodearl]\label{thm:goodearl}
      Let $L \in \kk[\partial]$ be a differential operator of order $n$. Then a basis of $\centr (L)$ as a $\Const[L]$-module consists of $t$ operators 
      \[\mathcal{G} = \{G_0 = 1,G_1,\ldots,G_{t-1}\},\]
      satisfying the following conditions:
      \begin{enumerate}
        \item Each $G_k$ has minimal order  within the set $\{Q\in \centr (L)\mid \ord(Q)\equiv \ord(G_k) \pmod{n}\}$, we say $G_k$ is \emph{order minimal}.
        \item The orders $(\text{mod } {n})$ of all elements of $\centr (L)$ form an additive subgroup of $\ \zz_n$ of cardinal $t$
\begin{equation}
    \mathcal{O} = \{[\ord(G_k)]_n \ :\ k = 0,\ldots,t-1\}. 
\end{equation}
    \end{enumerate}
Therefore, $t=|\cO|$ divides $n$.
    \end{theorem}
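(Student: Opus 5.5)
We prove Goodearl's theorem using the standard leading-coefficient argument for centralizers. The strategy has three steps: first show that an element of $\centr(L)$ is determined, up to lower-order terms in $\centr(L)$, by its order; then show that the set of orders modulo $n$ is a subgroup of $\zz_n$; finally build the basis by picking one order-minimal element per residue class and reduce every other element by a Euclidean-type induction on order.

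\textbf{Leading coefficients are constant.} Let $A\in\centr(L)$ have order $m$ and leading coefficient $a$. The term of order $n+m-1$ in $[L,A]=0$ equals $n\,\partial(a)$, because $L$ is in normal form. Hence $\partial(a)=0$ and $a\in\Const$.

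\textbf{Uniqueness up to lower order.} If $A,B\in\centr(L)$ have the same order, some $c\in\Const$ makes $\ord(A-cB)<\ord(A)$. This is the key lemma, and it is the step I expect to be the main obstacle. The standard route works with pseudo-differential operators. Let $L^{1/n}=\partial+\ldots$ be the $n$-th root of $L$ in $\kk((\partial^{-1}))$. Then show that any $A$ commuting with $L$ has the form $A=\sum_{j\le m} c_j L^{j/n}$ with $c_j\in\Const$. For this, write $A=\sum_j a_j L^{j/n}$ with $a_j\in\kk$, and determine the coefficients inductively from the top: commutation with $L$ forces $\partial(a_j)=0$ at each step, since $[L,a]$ has leading term $n\,\partial(a)\,\partial^{n-1}$ after the higher terms are removed. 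Consequences:
\begin{itemize}
\item Elements of $\centr(L)$ of the same order agree in leading term up to a constant.
\item $\centr(L)$ is commutative.
\item $\ord(AB)=\ord(A)+\ord(B)$, since $\kk[\partial]$ is a domain.
\end{itemize}

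\textbf{Orders form a subgroup.} Let $\cO$ be the set of classes $[\ord(Q)]_n$ for $Q\in\centr(L)\setminus\{0\}$. It contains $[0]_n$ and is closed under addition because orders of products add. A nonempty subset of a finite group closed under addition is a subgroup, so $\cO$ is a subgroup of $\zz_n$. Let $t=|\cO|$; by Lagrange, $t$ divides $n$.

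\textbf{The basis.} For each class in $\cO$, choose $G_k$ of minimal order in that class, with $G_0=1$.
\begin{itemize}
\item \emph{Spanning.} Take $A\in\centr(L)$ of order $m$, with $[m]_n=[\ord(G_k)]_n$. By minimality, $m=\ord(G_k)+jn$ with $j\ge0$. Then $L^jG_k\in\centr(L)$ has order $m$, so by the key lemma $\ord(A-cL^jG_k)<m$ for some $c\in\Const$. Induction on order gives $A\in\sum_k\Const[L]G_k$.
\item \emph{Independence.} The $G_k$ have pairwise distinct orders modulo $n$. So the nonzero summands $p_k(L)G_k$ in a relation $\sum_k p_k(L)G_k=0$ have pairwise distinct orders, and the one of highest order cannot cancel. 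Hence all $p_k=0$.
\end{itemize}
This gives a free $\Const[L]$-basis of size $t$ consisting of order-minimal elements, proving the theorem.
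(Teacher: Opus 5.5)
Your proof is correct. The paper does not prove this statement; it cites \cite[Theorem 1.2]{Goodearl1983}. Your argument is the standard one behind that citation:
\begin{itemize}
\item the residues of orders form a subgroup of $\zz_n$;
\item one order-minimal representative is chosen per residue;
\item spanning follows by leading-term reduction;
\item independence holds because the summands $p_k(L)G_k$ have pairwise distinct orders.
\end{itemize}
These are the same mechanics the paper itself uses in Lemma~\ref{lem:zero_check} and Algorithm~\ref{alg:reduce_as_module}.

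Two remarks. First, the step you flag as the main obstacle already follows from your first paragraph. If $A,B\in\centr(L)$ have the same order, their leading coefficients $a,b$ both lie in $\Const$, so $c=a/b$ gives $\ord(A-cB)<\ord(A)$. The Schur-type expansion $A=\sum_j c_jL^{j/n}$ in $\kk((\partial^{-1}))$ is correct but unnecessary. Commutativity of $\centr(L)$ is also never used: closure of $\centr(L)$ under products only needs $[L,AB]=[L,A]B+A[L,B]$.

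Second, the statement itself only says ``of order $n$''; normal form is imposed by the surrounding section. To drop that hypothesis, note that the coefficient of $\partial^{n+m-1}$ in $[L,A]$ is $n\ell\,\partial(a)-m\,a\,\partial(\ell)$, where $\ell=\lc(L)$. For two centralizer elements of the same order $m$, this yields $n\ell\bigl(b\,\partial(a)-a\,\partial(b)\bigr)=0$, hence $\partial(a/b)=0$. The case $m=0$ gives order-zero elements in $\Const$. The rest of your proof then goes through verbatim.
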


The \emph{rank} of a commuting set of ODOs is the greatest common divisor of all the orders. The following consequence of the previous theorem was proved in \cite{JimenezPastor2025b}.

\begin{corollary}
Let $L \in \kk[\partial]$ be a differential operator of order $n$ with group of orders $\cO$ of cardinal $t$. Then the rank of $\centr (L)$ equals $n/t$. 
\end{corollary}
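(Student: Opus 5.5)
Since $\cO$ is a subgroup of $\zz_n$ of cardinal $t$ (Theorem~\ref{thm:goodearl}), it is the unique cyclic subgroup of order $t$, namely $\cO=d\zz_n=\{[0]_n,[d]_n,\ldots,[(t-1)d]_n\}$ with $d=n/t$. The plan is to show that the rank $r=\gcd\{\ord(A)\mid A\in\centr(L)\setminus\{0\}\}$ equals $d$, by proving the two divisibilities $d\mid r$ and $r\mid d$.

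\emph{Step 1: $d$ divides every order.} For any nonzero $A\in\centr(L)$, Theorem~\ref{thm:goodearl} gives $[\ord(A)]_n\in\cO=d\zz_n$. So $\ord(A)\equiv kd \pmod n$ for some $k$. Since $d\mid n$, this gives $d\mid \ord(A)$. Hence $d$ is a common divisor of all orders, and $d\mid r$.

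\emph{Step 2: $r$ divides $d$.} The class $[d]_n$ lies in $\cO$, so some generator $G_k$ has $\ord(G_k)\equiv d\pmod n$, i.e.\ $\ord(G_k)=d+jn$ for some integer $j$. Also $L\in\centr(L)$ has order $n$. Then $r$ divides both $n$ and $d+jn$, hence divides $d$. Combined with Step 1, $r=d=n/t$.

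\emph{Main obstacle.} The only delicate point is the identification of $\cO$ with $d\zz_n$. This rests on the elementary fact that a finite cyclic group has exactly one subgroup of each order dividing the group order, and that subgroup is generated by $n/t$. Once that is in place, both divisibilities are immediate. I would also note explicitly that $L\in\centr(L)$, which makes $n$ one of the orders; this is what lets Step 2 pass from the congruence to an actual divisibility.
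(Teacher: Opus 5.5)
Your proof is correct. Identifying $\cO$ with the unique subgroup $\frac{n}{t}\zz_n$ gives $\frac{n}{t}\mid r$, and the orders $n=\ord(L)$ and $\ord(G_k)\equiv n/t \pmod n$ give $r\mid \frac{n}{t}$; the case $t=1$ is covered by $G_0=1$ with $j=-1$. This is the same direct deduction from Theorem~\ref{thm:goodearl} that the paper indicates by calling the result a consequence of that theorem, though the paper does not reproduce the proof and defers it to \cite{JimenezPastor2025b}.
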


If the rank of $\centr (L)$ is $1$ then $L$ is said to be \emph{algebro-geometric}. In this case there exists an operator in $\centr (L)\backslash \Const [L]$ of order coprime with $n$. Furthermore, in the rank one case $t=n$.

Centralizers are maximal commutative subrings of $\kk [\partial]$.
To each cyclic subgroup $\cO'$ of $\cO$ we can associate $\cA_{\cO'}$, a $\Const [L]$-submodule of $\centr (L)$, with basis the subset $\cG'\subset \cG$ of those $G_k$ such that $[\ord(G_k)]_n\in \cO'$. Observe that $\cA_{\cO'}$ is a maximal commutative subalgebra of $\centr (L)$ of rank $n/t'$, with $t'=|\cO'|$.
    
The output of the algorithm in \cite{JimenezPastor2025b} is the basis as a $\Const[L]$-module of a maximal commutative subalgebra $\cA_{\cO'}$ of $\centr(L)$ of rank dividing $(n,M)$. When $n$ and $M$ are not coprime, the algorithm is not guaranteed to compute a basis of the whole centralizer.
We will not address this limitation, and we will assume that we have computed a basis $\mathcal{G} = \{G_0,\ldots,G_{t-1}\}$ of $\centr(L)$ as a $\Const[L]$-module. However, we remark that results on this paper (see Section~\ref{sec:result}) can still be applied to the case where we do not have the full centralizer adapting the statements to the corresponding maximal commutative subalgebra $\cA_{\cO'}$ of $\centr(L)$. Moreover, the next theorem can be proved with the same arguments as Theorem \ref{thm:goodearl}.

\begin{theorem}\label{thm-cA}
    Let $\cA$ be a commutative subring of $\kk [\partial]$ such that $L\in \cA$ with $\ord (L)=n$. Consider the set of orders $\cO_\cA = \{[\ord(B)]_n\ :\ B \in \cA\}$. Then, $\cO_\cA$ is a subgroup of~$\zz_n$ and any set $\{G_i\ :\ i \in \cO_\cA\}$, where $G_i$ is order minimal within $\cA$, is a $\Const[L]$-module basis of $\cA$.
\end{theorem}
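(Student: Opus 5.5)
Two things need proving: that $\cO_\cA$ is a subgroup of $\zz_n$, and that any family $\{G_i : i\in\cO_\cA\}$ of order minimal elements is a $\Const[L]$-basis of $\cA$. The key input is the standard fact about commuting operators: if $A,B\in\kk[\partial]$ commute with $L$, $\ord(A)=\ord(B)$, and $L$ is in normal form, then $\lc(A)$ and $\lc(B)$ are constants. Comparing leading terms of $[L,A]=0$ gives $n\,\partial(\lc(A))=0$, and we are in characteristic zero. Hence $A-cB$ has strictly smaller order for a suitable $c\in\Const$, namely $c=\lc(A)/\lc(B)$. If $L$ is not in normal form, I would first conjugate and change variables to reduce to that case. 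Since order is additive, it is enough to check that $\lc(L)$ can be taken constant, and I will assume this, as the paper does throughout.

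For the subgroup property I use that $\cA$ is a ring. Since $1\in\cA$ (or $L\in\cA$), we get $[0]_n\in\cO_\cA$. For $A,B\in\cA$ we have $\ord(AB)=\ord(A)+\ord(B)$ because $\kk[\partial]$ is a domain, so $\cO_\cA$ is closed under addition. A nonempty subset of a finite group that is closed under addition is a subgroup, and this settles the first claim.

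For spanning, choose for each $i\in\cO_\cA$ an order minimal $G_i$ of order $m_i\equiv i\pmod n$. Let $B\in\cA$ have order $m$ with $[m]_n=i$. By order minimality $m\ge m_i$, and $n$ divides $m-m_i$, so $L^{(m-m_i)/n}G_i$ lies in $\cA$ and has order $m$. By the constant-leading-coefficient fact there is $c\in\Const$ such that $B-cL^{(m-m_i)/n}G_i$ has smaller order. This difference lies in $\cA$, so induction on order finishes the argument, with the base case $B$ of order $0$ being a constant, which is handled by $G_0$.

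For linear independence, suppose $\sum_i p_i(L)G_i=0$ with $p_i\in\Const[L]$ not all zero. The nonzero summands $p_i(L)G_i$ have orders $n\deg p_i+m_i$, which are congruent to $i$ modulo $n$ and are therefore pairwise distinct. The highest-order term therefore cannot cancel, which is a contradiction.

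The main obstacle is the constant-leading-coefficient fact. It needs normal form, or at least a constant leading coefficient of $L$, together with characteristic zero. Since Theorem~\ref{thm-cA} says only that $L\in\cA$, I would state this normalization explicitly. Commutativity of $\cA$ is what ensures that every element of $\cA$ commutes with $L$, which is exactly what the fact requires.
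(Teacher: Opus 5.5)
Your proof is correct and follows essentially the same route as the paper. The paper gives no separate proof of Theorem~\ref{thm-cA}; it only says the result follows with the same arguments as Goodearl's Theorem~\ref{thm:goodearl}, and your four steps (constant ratio of leading coefficients, additivity of orders, subtracting a constant multiple of $L^kG_i$, distinct residues mod $n$ for independence) are exactly that argument.

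The normalization you impose on $L$ is unnecessary. Comparing the coefficients of $\partial^{n+m-1}$ in $[L,A]=0$ gives $n\,\ell\,\partial(a)=m\,a\,\partial(\ell)$ for $\ell=\lc(L)$ and $a=\lc(A)$. Hence any two elements of $\cA$ of the same order have leading coefficients with constant ratio for arbitrary $L$, and you need no conjugation or change of variables, which may not be available over $\kk$ anyway.
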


    \begin{example}[Exponential]\label{exm:exponential:set}
        Let $L = \partial^3 +\frac{6}{\cosh(x)^2}\partial$. Using the algorithms in~\cite{JimenezPastor2025b}, we obtain that the centralizer of $L$ is of rank 1, and can be written as a $C[L]$-module generated by $\centr(L) = C[L]\oplus G_1C[L] \oplus G_2C[L]$, where operators $G_1$ and $G_2$ have orders $4$ and $5$, respectively, with the following formulas:
        \begin{align*}
            G_1 & = \partial^4 + \left(\frac{8}{\cosh(x)^2} - \frac{4}{3}\right)\partial^2 - \frac{8\sinh(x)}{\cosh(x)^3}\partial,\\
            G_2 & = \partial^5 + \frac{10}{\cosh(x)^2}\partial^3 \\ & - \frac{20 \sinh(x)}{\cosh(x)^3}\partial^2 + \left(\frac{16}{9} + \frac{80}{3\cosh(x)^2} - \frac{20}{\cosh(x)^4}\right)\partial.
        \end{align*}
    \end{example}

    \begin{example}[Elliptic]\label{exm:elliptic:set}
        Let $L = \partial^4 - 12\wp\partial^2 + 1$. Using the algorithms in~\cite{JimenezPastor2025b}, we obtain that the centralizer of $L$ is of rank 1 and can be written as
        \[\centr(L) = C[L] \oplus G_1C[L] \oplus G_2C[L] \oplus G_3C[L],\]
        where the operators $G_1,G_2$ and $G_3$ have orders $5,6$ and $7$, respectively, and have the following shape:
        \begin{align*}
            G_1 & = \partial^5 - 15\wp\partial^3 - \frac{15}{2}\wp'\partial^2-3g_2\partial,\\
            G_2 & = \partial^6 - 18\wp\partial^4 - 18\wp'\partial^3 - \left(36\wp^2 + 9g_2\right)\partial^2,\\
            G_3 & = \partial^7 - 21\wp\partial^5 - \frac{63}{2}\wp'\partial^4 - \left(126\wp^2+21g_2\right)\partial^3 - 63\wp\wp'\partial^2 - 27g_3\partial.
        \end{align*}
    The maximal subalgebra of rank $2$ is $\cA_2=C[L,G_2]=C[L]\oplus C[L] G_2$. The subalgebra
$$\cA_1=\Const[L,G_1] =  \Const[L]\oplus \Const[L] G_1 \oplus \Const[L] G_1^2\oplus \Const[L] G_1^3$$
has rank $1$ but is not maximal. It is written as a $\Const [L]$-module using its Goodearl's basis $\cG_1$, illustrating Theorem \ref{thm-cA}.
\end{example}

\section{Burchnall-Chaundy ideals}\label{subsec:bc}

Before proceeding to the main results of this paper, let us recall the definition of the Burchnall-Chaundy ideal. In their prominent results in 1923 and 1931~\citep{BC1,BC2}, Burchnall and Chaundy proved a correspondence between pairs of commuting differential operators and planar algebraic curves. 

For our purpose, we will focus on the ideas in~\cite{Rueda2025}. From the existence of the Burchnall-Chaundy polynomial, it is clear that the set of all polynomials that vanish when evaluated at a pair of commuting differential operators $L,P$ is an ideal. 
Moreover, we can consider a finite set of commuting differential operators $A_0,\ldots ,A_{t-1}$ in $\kk [\partial]$ to define the Burchnall-Chaundy ideal of a finitely generated commutative $\Const$-algebra in $\kk [\partial]$
\begin{equation*}
    \cA=\Const [A_0,A_1,\ldots ,A_{t-1}].
\end{equation*}
Given algebraic variables $Y=\{y_0,\ldots ,y_{t-1}\}$, $\partial(y_i)=0$, we establish the ring homomorphism $\phi_{\cA}$ from the polynomial ring $\Const[Y]$ to $\cA$ by $\phi_{\cA}(y_i)=A_i$ and $\phi_{\cA}(c)=c$. Thus we have an isomorphism of $\Const$-algebras
\begin{equation}\label{eq-cA}
    \cA\simeq \frac{\Const[Y]}{\ker(\phi_{\cA})}.
\end{equation}

\begin{definition}[\citep{Rueda2025}]
  Let $\cA$ be finitely generated commutative $\Const$-algebra in $\kk [\partial]$. The \emph{Burchnall-Chaundy (BC) ideal} of $\cA$ is $$\bc(\cA):=\ker(\phi_{\cA}),$$ the ideal of polynomials $p(Y)$ in $\Const[Y]$ such that $\phi_{\cA}(p)=0$, that we also write as $p(A_0,A_1,\ldots,A_{t-1}) = 0$. We will call \emph{BC polynomials} to the elements of $\bc(\cA)$.
\end{definition}

Coming back to the set up of this paper with a differential operator $L$ in $\kk[\partial]$, we can consider the BC ideal of the entire centralizer $\centr(L)$.  Let us fix a Goodearl's basis $\cG=\{1,G_1,\ldots ,G_{t-1}\}$ of $\centr (L)$ as a $C[L]$-module. We can write:
\begin{equation}
    \centr (L)=\Const[L,G_1,\ldots ,G_{t-1}]=\Const [L]\oplus \Const [L] G_1\ldots \oplus \Const [L] G_{t-1}.
\end{equation}
For $\cA=\centr (L)$, consider algebraic variables $\lambda$ and $\mu_1,\ldots,\mu_{t-1}$, to establish the ring homomorphism 
$\phi_{L}$ as the evaluation morphism from $\Const [\lambda, \overline{\mu}]$, that we will denote by 
\begin{equation}
 \phi_L:\Const [\lambda, \overline{\mu}]\rightarrow \centr (L), \,\,\, \phi_L(\lambda)=L,  \phi_L(G_i)=\mu_i.  
\end{equation}
\begin{definition}
    The \emph{Burchnall-Chaundy (BC) ideal} of $L$ is the BC ideal of its centralizer $\centr(L)$, which is the kernel of $\phi_L$ and we will denote by $\bc(L)$.
\end{definition}  


Burchnall-Chaundy ideals are prime ideals in the given ring $\Const [\lambda,\overline{\mu}]$ of constant coefficient polynomials because $\cA$ are integral domains, an immediate consequence of \eqref{eq-cA}. 

It was first shown by Burchnall and Chaundy~\cite{BC1, BC2} that BC-polynomials define algebraic curves. The quotient field of $\centr(L)$ is a function field in one variable, whose transcendence degree is one. Moreover any subalgebra $\cA$ of $\centr(L)$ has Krull dimension one, it is the affine ring of an algebraic curve $\Spec(\cA)$. This result was generalized later by Krichever~\cite{K78}, followed by Mumford~\cite{Mum2}, Verdier~\cite{Ve}, Mulase~\cite{Mulase} and~\cite[Theorem 1.8]{BZ} with coefficient in $\Const((x))$.
The domain $\Const[\Gamma] = \Const[\lambda,\overline{\mu}]/\bc(L)$ is the coordinate ring of the algebraic curve
  \begin{equation}
      \Gamma:=\{\eta\in\Const^n\mid g(\eta)=0, \forall g\in\bc (L)\},
  \end{equation}
  the \emph{spectral curve} of $L$. For $L$ of order $2$, it is known that the curve is planar. For higher orders, the curve may not be planar anymore~\citep{Rueda2024}.

It is important to remark that the definition of BC ideals does not depend on the choice of Goodearl's basis. The ideals $\bc (\cA)$ and the corresponding algebraic curves are always equivalent. Essentially, $\bc(L)$ encodes all the algebraic relations between $L$ and all the elements in the centralizer $\centr(L)$. Given $P \in \centr(L)$, using Theorem~\ref{thm:goodearl}, we know that there are unique polynomials $p_i\in \Const [\lambda]$ such that
\[P = p_0(L) + p_1(L)G_1 + \ldots + p_{t-1}(L)G_{t-1}.\]

\begin{definition}\label{def-coord}
Given $P \in \centr(L)$.
We can say that  $(p_0,\ldots ,p_{t-1})\in \Const [\lambda]^t$ are the \emph{coordinates of $P$ in the $\Const [L]$-basis $\cG$}.    
\end{definition}

We can see $P$ as the evaluation by $\phi_L$ of a polynomial in $\Const[\lambda,\overline{\mu}]$ that is linear in $\mu_1,\ldots,\mu_{t-1}$.

\begin{lemma}\label{lem:zero_check}
    Let $q(\lambda,\overline{\mu})\in \Const[\lambda,\overline{\mu}]$ be a linear polynomial in $\overline{\mu}$. Then $q \in \bc(L)$ if and only if $q = 0$.
\end{lemma}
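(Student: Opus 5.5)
The plan is to write $q$ in its coordinate form and reduce the claim to the uniqueness of coordinates in Goodearl's basis. A polynomial that is linear in $\overline{\mu}$ has the shape
\[
q(\lambda,\overline{\mu}) = p_0(\lambda) + p_1(\lambda)\mu_1 + \ldots + p_{t-1}(\lambda)\mu_{t-1}, \qquad p_i\in \Const[\lambda].
\]
Its image under $\phi_L$ is
\[
\phi_L(q) = p_0(L) + p_1(L)G_1 + \ldots + p_{t-1}(L)G_{t-1}\in \centr(L).
\]
The direction $q=0 \Rightarrow q\in\bc(L)$ is immediate, since $\bc(L)$ is an ideal and contains $0$.

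For the converse, assume $\phi_L(q)=0$. By Theorem~\ref{thm:goodearl}, $\cG=\{1,G_1,\ldots,G_{t-1}\}$ is a basis of $\centr(L)$ as a $\Const[L]$-module, so the sum $\Const[L]\oplus \Const[L]G_1\oplus\ldots\oplus\Const[L]G_{t-1}$ is direct. The zero operator has coordinates $(0,\ldots,0)$, and by the uniqueness in Definition~\ref{def-coord} we get $p_i(L)=0$ for every $i$.

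It then remains to pass from $p_i(L)=0$ to $p_i=0$ as a polynomial in $\Const[\lambda]$, i.e.\ to show that $\Const[L]$ is a free polynomial ring in $L$. Suppose $p_i=\sum_{j\le d} c_j\lambda^j$ with $c_d\neq 0$. Since $L$ is monic of order $n\ge 1$, the operator $p_i(L)$ has order $dn$ and leading coefficient $c_d\neq 0$. Hence $p_i(L)\neq 0$, a contradiction, so $p_i=0$ for all $i$ and therefore $q=0$.

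The step needing the most care is the independence claim. The precise content of ``basis'' in Goodearl's theorem is that the $G_k$ have pairwise distinct orders modulo $n$. If one wants a self-contained check rather than relying on the word ``basis'', argue by orders. For each $i$ with $p_i\neq 0$, the term $p_i(L)G_i$ has order $n\deg p_i+\ord(G_i)$, which is congruent to $\ord(G_i)$ modulo $n$. These residues are pairwise distinct, so the nonzero terms have pairwise distinct orders. The term of maximal order therefore cannot be cancelled, and the sum is nonzero unless every $p_i=0$. This gives the lemma directly.
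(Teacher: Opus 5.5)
Your proof is correct, and your closing ``self-contained check'' is exactly the paper's proof. The paper assumes $q\neq 0$ and notes that the nonzero summands $\phi_L(q_i(\lambda)\mu_i)=q_i(L)G_i$ have orders in pairwise distinct residue classes modulo $n$, by item~2 of Theorem~\ref{thm:goodearl}. It then concludes that $\ord(\phi_L(q))=\max_i\{n\deg_\lambda q_i+\ord(G_i)\}\ge 0$, so $\phi_L(q)\neq 0$.

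Your primary argument packages things differently. You take the $\Const[L]$-freeness of $\cG$ as a black box to get $p_i(L)=0$. You then separately check that $\lambda\mapsto L$ is injective on $\Const[\lambda]$. The paper never needs that check, because Definition~\ref{def-coord} already asserts uniqueness of the coordinates as polynomials in $\Const[\lambda]$. Your route is shorter once the basis property is granted, but it hides the real content: linear independence of Goodearl's basis is itself proved by precisely this leading-order argument.

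The order-based route uses only the distinct residues, so it is self-contained. It also yields the explicit formula for $\ord(\phi_L(q))$. The same leading-order and weight bookkeeping reappears in Algorithm~\ref{alg:reduce_as_module}, Lemma~\ref{lem-monomial} and Theorem~\ref{thm:prime}.

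One small wording point: pairwise distinct residues modulo $n$ are not what the word \emph{basis} means. The distinct residues give linear independence over $\Const[L]$. Order minimality, together with the fact that every order residue lies in $\cO$, gives spanning.
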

\begin{proof}
Recall that $\bc(L)=\ker  (\phi_L)$.
    Assume that $q \neq 0$ and  
\[q(\lambda,\overline{\mu}) = q_0(\lambda) + q_1(\lambda)\mu_1 + \ldots + q_{t-1}(\lambda)\mu_{t-1},\,\, q_i\in \Const [\lambda].\]
    If $q_i(\lambda) \neq 0$, then the summand $q_i(\lambda)\mu_i$ evaluates by $\phi_L$ to an operator of order congruent with $\ord (G_i) \pmod{n}$. By  Theorem~\ref{thm:goodearl}, $\ord(\phi_L(q_i(\lambda)\mu_i))\neq \ord(\phi_L(q_j(\lambda)\mu_j))$ for $j\neq i$.  Hence, 
    \[\ord(\phi_L(q)) = \max\{\deg_\lambda(q_i)n+\ord(G_i)\ :\ i \in \{0,\ldots,t-1\}\} \geq 0,\]
    or equivalently, $\phi_L(q) \neq 0$.
\end{proof}


\section{Gröbner basis for Burchnall-Chaundy ideals}\label{sec:result}

In this section, we obtain the main result of this paper, namely, the computation of a Gr\" obner basis of the Burchnall-Chaundy ideal of an ordinary differential operator $L \in \kk[\partial]$. Assume that we have computed a Goodearl's basis of $\centr(L)$ as a $\Const [L]$-module $\cG=\{G_1,\ldots,G_{t-1}\}$.

The goal is to present a Gröbner basis of the ideal $\bc(L)$, with the special property that the normal form of a polynomial in $\Const[\lambda,\overline{\mu}]$ is linear in $\overline{\mu}=\{\mu_1,\ldots ,\mu_{t-1}\}$ with respect to such Gröbner basis. To do so, we define a weighted monomial order $<_w$ on $\Const[\overline{\mu}]$, by assigning weights $w(\mu_i):=\ord(G_i)$. If we denote a monomial in  $\Const[\overline{\mu}]$ by $\mu^{\alpha}$ with $\alpha\in \nn^{t-1}$, its weight is $w(\mu^{\alpha}):=\sum_{i=1}^{t-1} w(\mu_i) \alpha_i$. The weighted monomial ordering $<_w$ is defined after fixing a lexicographic ordering $<_{ord}$, setting $\mu_i<_{ord} \mu_j$ if $\ord(G_i)<\ord(G_j)$. More precisely,
monomials in $\overline{\mu}$ are compared by their weight breaking ties with the lexicographic ordering.
See~\cite{Cox1996} for the definitions of product, weight and lexicographic monomial orders.


\begin{definition}\label{def:order}
We define a product order $\<$ in $\Const[\lambda,\overline{\mu}]$ consisting of the weighted order $<_w$ on variables $\{\mu_1,\ldots,\mu_{t-1}\}$ and assuming $\mu_i > \lambda^k$, for all $k\in\nn$. Denoting monomials in $\Const[\lambda,\overline{\mu}]$ by $\lambda^k\mu^{\alpha}$ with $\alpha\in \nn^{t-1}$, $k\in\nn$, we obtain
\begin{equation}
\lambda^k\mu^{\alpha}\,\<\,\lambda^{\ell}\mu^{\beta} \Leftrightarrow \mu^{\alpha}<_w \mu^{\beta} \mbox{ or } \mu^{\alpha} = \mu^{\beta} \mbox{ and } k<\ell.
\end{equation}    
\end{definition}

Let us consider $G_i$ and $G_j$ in $\cG$, which may be identical. Since both $G_i,G_j \in \centr(L)$, it is clear that $G_iG_j = G_jG_i \in \centr(L)$.  We can write uniquely:
\[G_i G_j = G_j G_i = p_{i,j,0}(L) + p_{i,j,1}(L)G_1 + \ldots + p_{i,j,t-1}(L)G_{t-1},\]
for unique polynomials $p_{i,j,k}(\lambda) \in \Const[\lambda]$. 
Thus a $\Const [L]$-basis $\cG$ of $\centr (L)$ determines the following set of polynomials 
\begin{equation}\label{eq-Rij}
    R_{i,j}(\lambda,\overline{\mu}):=\mu_i\mu_j - p_{i,j,0}(\lambda) - \sum_{k=1}^{t-1} p_{i,j,k}(\lambda)\mu_k,
\end{equation}
for every $i,j \in \{1,\ldots,t-1\}$.
It is clear that $R_{i,j}(\lambda,\overline{\mu}) \in \bc(L)$ since 
$$\phi_{L}(R_{i,j})=R_{i,j}(L,G_1,\ldots,G_{t-1}) = 0.$$
These polynomials $R_{i,j}$ are a really strong set of relations that allow to reduce the total degree of polynomials in $\Const[\lambda,\overline{\mu}]$ with respect to the variables $\overline{\mu}$ to one. This is the main idea behind the following results.

\begin{proposition}\label{prop:groebner_basis}
  Let $L$ be a differential operator in $\kk [\partial]$ of order $n$. Every $\Const [L]$-basis $\cG$ of $\centr (L)$ determines a Gröbner basis
  \begin{equation}\label{eq-Gbasis}
    \cB_{\cG} := \{R_{i,j}(\lambda,\overline{\mu})\ :\ i \leq j \in \{1,\ldots,t-1\}\}     
  \end{equation}
of $(\cB_{\cG})$ with respect to the monomial order $\<$.
\end{proposition}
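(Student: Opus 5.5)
The plan is to use the characterization of Gröbner bases recalled in Section~\ref{sec:definitions}: $\cB_{\cG}$ is a Gröbner basis of $(\cB_{\cG})$ if and only if every $p\in(\cB_{\cG})$ has normal form $\tilde p=0$ with respect to $\cB_{\cG}$. The argument has two steps. First, identify the leading monomials of the $R_{i,j}$, so that the possible normal forms are easy to describe. Second, kill those normal forms with Lemma~\ref{lem:zero_check}.

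\emph{Step 1 (leading monomials).} I will show that $\ell(R_{i,j})=\mu_i\mu_j$ for the order $\<$. Taking orders in $G_iG_j=\sum_k p_{i,j,k}(L)G_k$, the orders of the summands on the right lie in pairwise distinct classes mod $n$ by Theorem~\ref{thm:goodearl}. This gives
\[
\ord(G_i)+\ord(G_j)=\max_k\{n\deg p_{i,j,k}+\ord(G_k)\}.
\]
Hence every $\mu_k$ with $p_{i,j,k}\neq 0$ satisfies $w(\mu_k)=\ord(G_k)\le \ord(G_i)+\ord(G_j)=w(\mu_i\mu_j)$. Terms $\lambda^m$ have weight $0<w(\mu_i\mu_j)$, because $\ord(G_i)>0$ for $i\ge 1$. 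Since $\<$ compares the $\overline{\mu}$-part first, any power of $\lambda$ attached to these terms is irrelevant.

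The delicate case is equality of weights. This happens only when $\deg p_{i,j,k}=0$ and $\ord(G_k)=\ord(G_i)+\ord(G_j)$. Then the comparison of $\mu_k$ with $\mu_i\mu_j$ falls to the lexicographic tie-break. I expect this to be the main obstacle. I would first examine the convention for $<_{ord}$: the tie-break must rank the degree-two monomial $\mu_i\mu_j$ above $\mu_k$. If the convention does not do this as stated, I would either refine the tie-break (e.g.\ compare total $\overline{\mu}$-degree before lex), or use order-minimality of $G_k$ to replace $G_k$ by $G_k-cG_iG_j$ and show that such ties cannot occur. With Step 1 settled, $\ell(\cB_{\cG})$ is generated by all degree-two monomials $\mu_i\mu_j$ in $\overline{\mu}$. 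Consequently the monomials not divisible by any leading monomial are exactly $\lambda^k$ and $\lambda^k\mu_i$.

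\emph{Step 2 (normal forms vanish).} Take $p\in(\cB_{\cG})$ and divide by $\cB_{\cG}$ using the Division Theorem: $p=q+\tilde p$ with $q\in(\cB_{\cG})$. No monomial of $\tilde p$ is divisible by some $\mu_i\mu_j$, so by Step 1 $\tilde p$ is linear in $\overline{\mu}$. Each $R_{i,j}$ lies in $\bc(L)$, so $(\cB_{\cG})\subseteq\bc(L)$. Therefore $\tilde p=p-q\in\bc(L)$. By Lemma~\ref{lem:zero_check}, a polynomial linear in $\overline{\mu}$ lies in $\bc(L)$ only if it is zero, so $\tilde p=0$. Hence every element of $(\cB_{\cG})$ reduces to $0$, and $\cB_{\cG}$ is a Gröbner basis. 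As a by-product, the same argument applied to any $p\in\bc(L)$ gives $(\cB_{\cG})=\bc(L)$.
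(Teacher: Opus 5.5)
\textbf{The gap: the tie case of Step 1.} You leave this case open, and it cannot be closed for the order as stated.

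First, the paper's convention does not do what you need. The tie-break is lex with $\mu_i<_{ord}\mu_j$ when $\ord(G_i)<\ord(G_j)$. A tie $\ord(G_k)=\ord(G_i)+\ord(G_j)$ forces $\ord(G_k)>\ord(G_i),\ord(G_j)$. So $\mu_k$ is the most significant variable occurring, and $\mu_k>_w\mu_i\mu_j$ — the opposite of what you need.

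Second, your other option is a dead end. Order-minimality does not exclude ties: $G_k-cG_iG_j$ has smaller order, so it lies outside the residue class of $\ord(G_k)$ (or vanishes) and cannot replace $G_k$.

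Third, ties really occur. Take $\ord(L)=3$ and $\centr(L)=\Const[L,A]$, the two-generator case mentioned in the introduction. Then $\{1,A,A^2\}$ is a Goodearl basis. Here $R_{1,1}=\mu_1^2-\mu_2$ has $\ell(R_{1,1})=\mu_2$, while $\ell(R_{1,2})=\mu_1\mu_2$ and $\ell(R_{2,2})=\mu_2^2$, so $\ell(\cB_{\cG})=(\mu_2)$. Yet
\[(\mu_1-p_{1,2,2}(\lambda))R_{1,1}+R_{1,2}=\mu_1^3-p_{1,2,2}(\lambda)\mu_1^2-p_{1,2,1}(\lambda)\mu_1-p_{1,2,0}(\lambda)\in(\cB_{\cG})\]
has leading monomial $\mu_1^3\notin(\mu_2)$. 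So with $\<$ taken literally the proposition fails. The paper's proof has the same hole: it simply asserts that $\ell(\cB_{\cG})$ contains every degree-two monomial in $\overline{\mu}$.

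\textbf{The repair.} Use your first option, and say explicitly that it changes the order in the statement. Compare $\overline{\mu}$-monomials by weight, then by total $\overline{\mu}$-degree; alternatively, reverse the lex ranking so that variables of smaller order are more significant. With either choice, your Step 1 gives $\ell(R_{i,j})=\mu_i\mu_j$ for all $i\le j$, and your Step 2 completes the proof.

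Step 2 is in fact more careful than the paper's second case. The paper deduces that $p$ is linear in $\overline{\mu}$ from $\deg_{\overline{\mu}}\ell(p)\le 1$. That inference is not valid for a weighted order, since a single $\mu_k$ can outweigh some $\mu_i\mu_j$. Your remainder-on-division argument combined with Lemma~\ref{lem:zero_check} handles that case correctly.
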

\begin{proof}
  Consider the ideal $I = \left(\cB_{\cG}\right)$ generated by $\cB_{\cG}$ in $\Const[\lambda,\overline{\mu}]$. By Definition~\ref{def:groebner}, $\cB_{\cG}$ is a Gröbner basis of $I$ if and only $\ell(I)=\ell(\cB_{\cG})$. Since $\cB_{\cG}\subset I$, we have $\ell(B_{\cG}) \subseteq \ell(I)$.
Given a nonzero $p \in I$,  there are two options:
  \begin{enumerate}
    \item If $\deg_{\overline{\mu}}(\ell(p))>1$, then there exists $r\in\cB_{\cG}$ such that $\ell(r)$ divides $\ell(p)$, since $\ell(\cB_{\cG})$ contains all possible degree two monomials in $\overline{\mu}$. Thus $\ell(p) \in \ell(\cB_{\cG})$.
    \item If $\deg_{\overline{\mu}}(\ell(p))\leq 1$ then $p$ is linear in $\mu_1,\ldots,\mu_{t-1}$ and by Lemma~\ref{lem:zero_check}, $p \notin \bc(L) \supset I$.
\end{enumerate}
Hence, $\ell(I) \subseteq \ell(\cB_{\cG})$, and therefore $\cB_{\cG}$ is a Gröbner basis of the ideal $I$ w.r.t. the given monomial ordering $\<$.
\end{proof}

Proposition~\ref{prop:groebner_basis} shows that the set of polynomials $\cB_{\cG}$ has a strong computational structure. Among other properties, we can now reduce any polynomial $p \in \Const[\lambda,\overline{\mu}]$ to a polynomial $\tilde{p}$ such that $\ell(\tilde{p}) < \ell(R_{i,j})$ for every $i\leq j \in \{0,\ldots,n-1\}$. In particular, the normal form w.r.t. $\cB_{\cG}$ of any polynomial $p \in \Const[\lambda,\overline{\mu}]$ is a linear polynomial in the variables  $\overline{\mu}$.

\begin{theorem}\label{thm:burchnall_chaundy}
 Let $L$ be a differential operator in $\kk [\partial]$ of order $n$ and let $\cG$ be a $\Const [L]$-basis of its centralizer $\centr(L)$. Then $\cB_{\cG}$ is a Gröbner basis of the BC-ideal $\bc(L)$ with respect to the monomial ordering $\<$.
\end{theorem}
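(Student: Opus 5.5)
By Proposition~\ref{prop:groebner_basis}, $\cB_{\cG}$ is already a Gröbner basis of the ideal $I=(\cB_{\cG})$ with respect to $\<$. It therefore suffices to prove the equality of ideals $I=\bc(L)$, because a Gröbner basis of $I$ is then a Gröbner basis of $\bc(L)$.

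The inclusion $I\subseteq \bc(L)$ is immediate. Each $R_{i,j}$ satisfies $\phi_L(R_{i,j})=0$ by construction: the coefficients $p_{i,j,k}$ are exactly the coordinates of $G_iG_j$ in the basis $\cG$ (Definition~\ref{def-coord}). Since $\bc(L)=\ker(\phi_L)$ is an ideal, it contains the ideal generated by these polynomials.

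For the reverse inclusion, take $p\in \bc(L)$ and apply the Division Theorem with respect to $\cB_{\cG}$ and the order $\<$. This gives $p=q+\tilde p$ with $q\in I$ and $\tilde p$ the normal form of $p$. The key point is that no monomial of $\tilde p$ is divisible by any $\ell(R_{i,j})$. We must check that $\ell(R_{i,j})=\mu_i\mu_j$. The other terms of $R_{i,j}$ are $p_{i,j,0}(\lambda)$, which has $\mu$-part $1$, and $p_{i,j,k}(\lambda)\mu_k$, which has $\mu$-part $\mu_k$. Under $<_w$ both $1$ and every single $\mu_k$ lie below the quadratic monomial $\mu_i\mu_j$. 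For $1$ this is because the weights are positive. For $\mu_k$ it holds because $p_{i,j,k}\neq 0$ forces $\ord(G_k)+n\deg p_{i,j,k}=\ord(G_iG_j)$, hence $w(\mu_k)=\ord(G_k)\le \ord(G_i)+\ord(G_j)=w(\mu_i\mu_j)$. In the tie case $w(\mu_k)=w(\mu_i\mu_j)$, the lexicographic tie-break or the product-order convention separates them. The product order compares $\mu$-parts first, so the $\lambda$-degrees do not matter. (This point is presumably already implicit in Proposition~\ref{prop:groebner_basis}, and I would simply cite it.) Consequently $\ell(\cB_{\cG})$ contains every monomial of degree two in $\overline{\mu}$, so $\tilde p$ is linear in $\overline{\mu}$.

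To conclude, $\tilde p=p-q\in \bc(L)$, because both $p$ and $q$ lie in $\bc(L)$ by the first inclusion. Lemma~\ref{lem:zero_check} then forces $\tilde p=0$, so $p=q\in I$, which gives $\bc(L)=I$. The only delicate step is identifying the leading monomials of the $R_{i,j}$ with $\mu_i\mu_j$; everything else is formal. Alternatively, the argument of Proposition~\ref{prop:groebner_basis} can be rerun verbatim with $I$ replaced by $\bc(L)$. For any nonzero $p\in\bc(L)$, either $\deg_{\overline{\mu}}\ell(p)\ge 2$, in which case $\ell(p)\in\ell(\cB_{\cG})$, or $p$ is linear in $\overline{\mu}$, which Lemma~\ref{lem:zero_check} rules out. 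Hence $\ell(\bc(L))=\ell(\cB_{\cG})$, and since $\cB_{\cG}\subset\bc(L)$, the standard fact that such a set generates the ideal also yields $(\cB_{\cG})=\bc(L)$.
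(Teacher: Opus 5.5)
Your main argument is exactly the paper's proof: divide $p\in\bc(L)$ by $\cB_{\cG}$, observe that the normal form $\tilde p$ is linear in $\overline{\mu}$ and lies in $\bc(L)$, and conclude with Lemma~\ref{lem:zero_check}. The paper never checks that $\ell(R_{i,j})=\mu_i\mu_j$; it takes this from Proposition~\ref{prop:groebner_basis}. You rightly flag this as the delicate step, but your verification fails in the tie case.

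If $\ord(G_k)=\ord(G_i)+\ord(G_j)$ with $i,j\ge 1$, then $\ord(G_k)$ exceeds both $\ord(G_i)$ and $\ord(G_j)$. So $\mu_k$ is the largest variable involved for $<_{ord}$, and the lexicographic tie-break gives $\mu_k>_w\mu_i\mu_j$, i.e.\ $\ell(R_{i,j})=\mu_k$. The product-order convention cannot rescue this, because the two $\overline{\mu}$-parts differ.

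Ties occur exactly when $G_iG_j$ is itself order-minimal in its class. An example is the two-generator case $\centr(L)=\Const[L,A]$ with $\ord(L)=3$ mentioned in the introduction. There the basis is $1,G_1,G_2$ with $\ord(G_2)=2\ord(G_1)$, and $R_{1,1}=\mu_1^2-c\mu_2-\cdots$ with $c\neq 0$, so $\ell(\cB_{\cG})\subseteq(\mu_2)$. Yet the polynomial expressing $G_1^3$ in the basis lies in $\bc(L)$ and has leading monomial $\mu_1^3\notin(\mu_2)$. So with the tie-break exactly as defined in the paper, your argument breaks down, and so does the conclusion itself.

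You need a tie-break that favours higher $\overline{\mu}$-degree. For instance, compare weight, then total $\overline{\mu}$-degree, then lex; or use lex with the variable order reversed. With that change your main argument is complete.

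Two smaller points. First, what actually holds is $\ord(G_k)+n\deg p_{i,j,k}\le\ord(G_iG_j)$, with equality for only one $k$. Your equality is wrong, but the conclusion $w(\mu_k)\le w(\mu_i\mu_j)$ survives.

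Second, your alternative route reuses a step from the proof of Proposition~\ref{prop:groebner_basis} that is not valid for a weight order. The condition $\deg_{\overline{\mu}}\ell(p)\le 1$ does not force $p$ to be linear in $\overline{\mu}$. Nothing prevents a single $\mu_k$ from outweighing a product $\mu_i\mu_j$ whose weight lies in a different residue class mod $n$. The correct version is your own division argument. The leading term of such a $p$ is not divisible by any $\mu_i\mu_j$, so it passes untouched into the remainder. That remainder is then a nonzero element of $\bc(L)$ that is linear in $\overline{\mu}$, contradicting Lemma~\ref{lem:zero_check}.
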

\begin{proof}
  Let $\cB_{\cG}$ be as in \eqref{eq-Gbasis} and let $p \in \bc(L)$. Let us reduce $p$ w.r.t. $\cB_{\cG}$, obtaining 
  \[p = q + \tilde{p},\]
  where $q \in (\cB_{\cG})$ and $\tilde{p}$ is the normal form of $p$. In particular, since $(\cB_{\cG}) \subset \bc(L)$, we obtain that $\tilde{p} \in \bc(L)$. But we know that $\tilde{p}$ is linear in $\overline{\mu}$. Using Lemma~\ref{lem:zero_check} we conclude that $\tilde{p} = 0$ and, hence $p \in (\cB_{\cG})$.
\end{proof}

We present now the main idea for an Algorithm \ref{alg:compute_bc_ideal} that computes $\bc (L)$, by computing the polynomials $R_{i,j}$, for $i,j \in \{1,\ldots,t-1\}$, of the Gr\" obner basis $\cB_{\cG}$. 
It uses multivariate division in $\Const [\lambda,\overline{\mu}]$ w.r.t. the monomial order $\<$ of Definition \ref{def:order}, that is weighted on the variables $\overline{\mu}$ using the orders of the operators in the $\Const [L]$-basis $\cG$.

For a polynomial $p \in \Const[\lambda,\overline{\mu}]$, this division can be used to check whether $p\in \bc(L)$ or not.
We will compute a \emph{quotient} $q \in \bc(L)$ and its normal form $\tilde{p}$ such that $p=q+\tilde{p}$. 
Let $P = \phi_L(p)$. If $\ord(P) \equiv \ord(G_i)\pmod{n}$ and $\ord(P) \geq \ord(G_i)$, for some $i \in \{1,\ldots,t-1\}$, then we can write $\ord(P) = kn + \ord(G_i)$. Consider $r = p - \lc(p)\lambda^k\mu_i$. It is clear that $R = \phi_L(r) = P - \lc(P)L^kG_i$ and satisfies $\ord(R) < \ord(P)$.
More precisely, we can iterate this process. Set $q = 0$ and $\tilde{p} = 0$:
\begin{itemize}
    \item If $P = 0$: then $p \in \bc(L)$, hence set $q \gets q+p$, and terminate.
    \item If $\ord(P) \not\equiv \ord(G_i) \pmod{n}$ for all $i \in \{1,\ldots,t-1\}$: then $p \notin \bc(L)$, set $\tilde{p} \gets p$ and terminate.
    \item Assume $\ord(P) \equiv \ord(G_i) \pmod{n}$ for some $i \in \{1,\ldots,t-1\}$. There are two possibilities:
        \begin{itemize}
            \item $\ord(P) \geq \ord(G_i)$: compute $r$ and $R$ as above and set $q \gets q + \lc(p)\lambda^k\mu_i$, $p \gets r$, $P\gets R$ to start over.
            \item $\ord(P) < \ord(G_i)$: then $p \notin \bc(L)$, so set $\tilde{p} \gets p$ and terminate.
        \end{itemize}
\end{itemize}

The next algorithm, Algorithm~\ref{alg:reduce_as_module}, goes further and checks if an operator $P\in \kk [\partial]$ belongs to the centralizer $\centr (L)$ or not, computing at the same time the coordinates of an operator of the centralizer, as in Definition~\ref{def-coord}. Algorithm~\ref{alg:reduce_as_module} could also be used for any commutative $\Const$-algebra containing $L$ for which a $\Const[L]$-module basis has been computed as in Theorem \ref{thm-cA}.

\begin{algorithm}[!ht]
\caption{\texttt{Reduce\_as\_Module}}\label{alg:reduce_as_module}
    \Input{$P$ in $\kk[\partial]$; $L$ in $\kk[\partial]$ in normal form; Goodearl's basis $\cG=\{G_0,\ldots,G_{t-1}\}$ of $\centr(L)$ as a $\Const[L]$-module.}
    \Output{Coordinates $(p_0,\ldots,p_{t-1})$ of $P$ in $\cG$ if $P\in \centr(L)$, \emph{Error} otherwise.}

    $n \gets \ord(L)$\;
    $\mathcal{O} \gets \{\ord(G_i) \pmod{n}\ :\ i = 0,\ldots,t-1\}$\;
    $\mathcal{G} \gets \{\ord(G_i) \pmod{n} \mapsto G_i\ :\ i = 0,\ldots,t-1\}$\;
    $p_i \gets 0\ \text{for}\ i = 0,\ldots,t-1$\;

    \While{$\ord(P) \geq n$}{%
        $m_0 \gets \ord(P) \pmod{n}$\;
        \If{$m_0 \notin \mathcal{O}$}{%
          \Return \emph{Error}\;
        }
        $G \gets \mathcal{G}(m_0)$\;
        \If{$\ord(P) < \ord(G)$}{%
          \Return \emph{Error}\;
        }
        $k \gets \left(\ord(P) - \ord(G)\right)/n$\tcp*{Exact division}
        $p_{m_0} \gets p_{m_0} + \lc(P)\lambda^k$\;
        $P \gets P - \lc(P)L^kG$\tcp*{Reduces the order of $P$}
    }

    \If{$P \neq 0$}{%
        \Return \emph{Error}\;
    }
    \Return $\left(p_0,\ldots,p_{t-1}\right)$\;
\end{algorithm}

In Algorithm~\ref{alg:reduce_as_module}, step 2, we define the map $\mathcal{G}: \mathcal{O}\rightarrow \centr(L)$ that takes a congruence class $(\text{mod } {n})$ and returns the corresponding element in Goodearl's basis whose order is congruent with this element. 

\begin{example}[Continuation of Example~\ref{exm:exponential:set}]\label{exm:exponential:g1g2}
    Consider the operators $G_1$ and $G_2$ as in Example~\ref{exm:exponential:set} and their product $G_1G_2$. This is an operator of order $9=3\cdot3$, so the operator $G_1G_2 - L^3$ has order strictly smaller:
    \begin{align*}
        G_1G_2 - L^3 = & -\frac{4}{3}\partial^7 + \left(\frac{16}{9} - \frac{56}{3\cosh(x)^2}\right)\partial^5 + \frac{224\sinh(x)}{3\cosh(x)^3}\partial^4\\
                       & +\left(\frac{224}{\cosh(x)^4}-\frac{544}{3\cosh(x)^2}-\frac{64}{27}\right)\partial^3\\
                       & +\left(\frac{640}{3\cosh(x)^3}-\frac{448}{\cosh(x)^5}\right)\sinh(x)\partial^2\\
                       & +\left(\frac{512}{\cosh(x)^4}-\frac{896}{9\cosh(x)^2}-\frac{448}{\cosh(x)^6}\right)\partial
    \end{align*}

    To reduce this operator of order 7 we use that $7 = 3+4$, so we can reduce it with $\frac{4}{3} LG_1$. This leads to the operator
    \[G_1G_2 - L^3 + \frac{4}{3}LG_1 = \frac{-64}{27}\partial^3 - \frac{128}{9\cosh(x)^2}\partial = -\frac{64}{27}L.\]
    Hence, we conclude that
    \[G_1G_2 = L^3 - \frac{4}{3}LG_1 - \frac{64}{27}L.\]
\end{example}

Computing the polynomials $R_{i,j}(\lambda,\overline{\mu})$ for $i\leq j \in \{1,\ldots,t-1\}$ is now straightforward. We can do so by computing the products $G_i G_j$ and then reducing them as linear combinations of the operators $G_0,\ldots,G_{t-1}$ and $L$. The algorithm is described in Algorithm~\ref{alg:compute_bc_ideal}.

\begin{algorithm}[!ht]
\caption{\texttt{BC\_Ideal}}\label{alg:compute_bc_ideal}
    \Input{$L$ in $\kk[\partial]$ in normal form; Goodearl's basis $\mathcal{G}=\{1,G_1,\ldots,G_{t-1}\}$ for $\centr(L)$ as a $\Const[L]$-module.}
    \Output{A Gröbner basis of the Burchnall-Chaundy ideal $\bc(L)$ w.r.t the monomial order $\<$.}

    \For{$i=1,\ldots,t-1$}{%
        \For{$j=i,\ldots,t-1$}{%
            $\left(p_0,\ldots,p_{t-1}\right) \gets \texttt{Reduce\_as\_Module}(G_i\cdot G_j, L, \mathcal{G})$\;
            
            $R_{i,j} \gets \mu_i\mu_j - \sum_{l=0}^{t-1} p_l(\lambda)\mu_l$\;
        }
    }
    \Return $\{R_{i,j}(\lambda,\overline{\mu})\ :\ i\leq j \in \{1,\ldots,t-1\}\}$\;
\end{algorithm}

\begin{example}[Continuation of Example~\ref{exm:exponential:g1g2}]
    We use the algorithm \texttt{BC\_ideal} to compute the Gr\" obner basis of $\bc (L)$, for the operator $L$ as in Example~\ref{exm:elliptic:set}. We need to compute the coordinates in the $\Const [L]$-basis $\cG=\{1,G_1,G_2\}$ of each of the operators $G_1^2, G_1G_2$ and $G_2^2$ using the algorithm  \texttt{Reduce\_as\_Module}. Then translate these computations into polynomials using the variables $\lambda,\mu_1$ and $\mu_2$.

    $G_1^2$ is an operator of order $8=3+5$, so we start by reducing with $LG_2$, leading to an operator of order $6$ which coincides with $\frac{8}{3}L^2$. Hence we obtain the polynomial $R_{1,1} = \mu_1^2 - \lambda\mu_2 + \frac{8}{3}\lambda^2$.

    For $G_1G_2$, we already shown the details in Example~\ref{exm:exponential:g1g2}, leading to the polynomial $R_{1,2} = \mu_1\mu_2 + \frac{4}{3}\lambda\mu_1 + \left(\frac{64}{27}\lambda - \lambda^3\right)$.

    Finally, we compute the operator $G_2^2$ which is reduced sequentially by the operators $L^2G_1$ (order 10), $LG_2$ (order 8), $L^2$ (order 6) and $G_1$ (order 4), obtaining the polynomial $R_{2,2} = \mu_2^2 + \left(\frac{64}{27}\lambda - \lambda^2\right)\mu_1 - \frac{4}{3}\lambda\mu_2 - \frac{32}{9}\lambda^2$. Putting everything together we get
    \[\bc(L) = \left(R_{1,1},R_{1,2},R_{2,2}\right).\]
    Since $L$ has order $3$, the BC-ideal can also be computed using differential resultants as in ~\cite{Rueda2024}. More precisely $\bc(L)=(f_1,f_2,f_3)$ where
    $f_i=\dres(L-\lambda, G_i-\mu_i)$, $i=1,2$ and $f_3=\dres(G_1-\mu_1, G_2-\mu_2)$.
\end{example}

\begin{example}[Continuation of Example~\ref{exm:elliptic:set}]
We use the algorithm \texttt{BC\_ideal} to compute the Gr\" obner basis of $\bc (L)$, for the operator $L$ as in Example~\ref{exm:elliptic:set}. We need to compute the coordinates in the $\Const [L]$-basis $\cG=\{1,G_1,G_2,G_3\}$, using Algorithm~\ref{alg:reduce_as_module}, of each of the operators \[G_1^2,\quad G_1G_2,\quad G_1G_3,\quad G_2^2,\quad G_2G_3,\quad G_3^2.\]

We obtain the Gröbner basis of $\bc(L)$ determined by $\cG$, of the form $\bc(L) = \left(R_{1,1},R_{1,2},R_{1,3},R_{2,2},R_{2,3},R_{3,3}\right)$, where
\begin{align*}
    R_{1,1} = &\mu_1^2 +\left(\frac{3 g_2}{4} +1 - \lambda\right)\mu_2 + \frac{27g_3}{4}(\lambda -1),\\
    R_{1,2} = &\mu_1\mu_2 + (1 -\lambda)\mu_3,\\
    R_{1,3} = &\mu_1\mu_3 + \frac{27 g_3}{4}\mu_2 + \left(\frac{9g_2^2+15g_2+4}{4} \right.\\
    & \left.- \frac{9g_2^2+30g_2+12}{4}\lambda + \frac{15g_2 + 12}{4}\lambda^2 - \lambda^3\right),\\
    R_{2,2} = &\mu_2^2 +\left(1+3g_2\right)-\left(6g_2+3\right)\lambda + \left(3g_2+3\right)\lambda^2 - \lambda^3,\\
    R_{2,3} = &\mu_2\mu_3 - \left(\lambda^2 - \left(3g_2+2\right)\lambda + \left(3g_2+1\right)\right)\mu_1,\\
    R_{3,3} = &\mu_3^2 - \left(\lambda^2 - \frac{15g_2 + 8}{4}\lambda + \frac{9g_2^2+15g_2+4}{4}\right)\mu_2 \\
    & + \left(\frac{27g_3}{4}\lambda^2-\frac{(81g_2 + 54)g_3}{4}\lambda+\frac{(81g_2 + 27)g_3}{4}\right).
\end{align*}
Observe that in this case, $\centr (L)$ has a $\Const [L]$-submodule of rank $2$, namely $\cA_2=\Const [L,G_2]=\Const [L]\oplus \Const [L] G_2$ since $\ord(G_2)\equiv 2 \pmod{4}$ with $\Const [L]$-basis $\cG_2=\{1,G_2\}$. In this case $\bc (\cA_2)=(R_{2,2})$ the Gr\" obner basis is just $\{R_{2,2}\}$ and $R_{2,2}$ is exactly the differential resultant of $L-\lambda$ and $G_2-\mu_2$. 

To apply the algorithm to other commutative subalgebras, for instance $\cA_1=\Const [L,G_1]$ (or $\Const [L,G_3]$), we use the Goodearl's basis $\cG_1$ to define the homomorphism $\phi_{\cA_1}$ sending $\lambda\mapsto L$, $\mu_1\mapsto G_1$, $\mu_2\mapsto G_1^2$ and $\mu_3\mapsto G_1^3$.
then applying the algorithm \texttt{BC\_ideal} to obtain
the Gr\" obner basis of the BC ideal $\bc(\cA_1)$.  
In this case it is much cleaner to just assign $\mu\mapsto G_1$ and obtain $\Const [L,G_1]$ isomorphic to $\Const [\lambda,\mu]/(f)$, where $f$ is the differential resultant of $L-\lambda$ and $G_1-\mu$. 

\end{example}

The BC-ideal $\bc (L)$ contains the ideal generated by the radicals of the differential resultants of all pairs in $\{L-\lambda, G_i-\mu_i:i=1,\ldots ,t-1\}$. For $\ord(L)>3$ these ideals do not coincide in general. 

\section{Primality of Burchnall-Chaundy ideals}\label{sec:primality}

In order to study the differential Galois theory of the spectral problem $(L-\lambda)(y)=0$, 
under the assumption that $\centr (L)$ is non-trivial, $\centr (L)\neq \Const[L]$,
we must consider a field of coefficients containing $\kk$ and the spectral parameter $\lambda$, but reflecting the fact that $\lambda$ is not a free parameter. It is instead governed by the spectral curve $\Gamma$, see \cite{BEG, MRZ2, Rueda2024, Rueda2025}.

In Section~\ref{subsec:bc}, we noted that for any differential operator $L \in \mathbb{K}[\partial]$ with non-trivial centralizer, the Burchnall-Chaundy ideal $\bc(L) \subset \Const[\lambda,\overline{\mu}]$ is always a prime ideal since the quotient ring  
\begin{equation}
    \Const [\Gamma]:=\frac{\Const [\lambda,\overline{\mu}] }{\bc (L)}
\end{equation}
is an integral domain isomorphic to $\centr (L)$.

Let us consider the differential ideal $[\bc(L)]$ generated by $\bc(L)$ in the polynomial ring $\kk[\lambda,\overline{\mu}]$. Recall that $\bc(L)$ is generated by constant coefficient polynomials $\bc(L)=(\cB_{\cG})$, being generated by the Gr\" obner basis $\cB_{\cG}=\{R_{i,j}\}$ determined by a given $\Const [L]$-basis of $\centr (L)$. 
Observe that $[\bc (L)]$ is in fact the ideal generated by $\{R_{i,j}\}$ in $\kk[\lambda,\overline{\mu}]$
\begin{equation}
    [\bc(L)] = \left(R_{i,j}\ :\ i,j \in \{1,\ldots,t-1\}\right).
\end{equation}
We can thus consider the quotient ring 
\begin{equation}
    \kk [\Gamma]:=\frac{\kk [\lambda,\overline{\mu}] }{[\bc (L)]},
\end{equation}
which contains $L-\lambda$, where we continue denoting $\lambda$ now the class $\lambda+[\bc (L)]$. Furthermore $\kk [\Gamma]$ is a differential ring, with extended derivation $\tilde{\partial}$ defined by
\begin{equation}
    \tilde{\partial} (f+[\bc (L)])=\partial (f)+[\bc (L)].
\end{equation}

In this section, we use the fact that $\cB_{\cG}$ is a Gr\" obner basis of $\bc (L)$ for the monomial order $\<$ established on monomials in the variables $\lambda,\mu_1,\ldots ,\mu_{t-1}$, see Definition \ref{def:order}, to prove that the extended ideal $[\bc(L)]$ is also a prime differential ideal. Although intuitive, this result cannot be proven as for $\bc(L)$: the homomorphism $\phi_L$ cannot be extended to the differential ring $\kk[\lambda,\overline{\mu}]$ because the non-constant elements of $\kk$ do not commute with $\partial$.

\begin{proposition}
Given $L\in \kk [\partial]$ and a $\Const [L]$-basis $\cG$ of $\centr (L)$. The set $\cB_{\cG}=\{R_{i,j}\}$ is a Gr\" obner basis of $[\bc (L)]$ w.r.t. the monomial order $\<$ in $\kk[\lambda,\overline{\mu}]$.
\end{proposition}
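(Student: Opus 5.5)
Since $[\bc(L)]$ is the ideal generated by $\cB_{\cG}$ in $\kk[\lambda,\overline{\mu}]$ (the $R_{i,j}$ have constant coefficients, so their derivatives vanish), we only need to show that $\cB_{\cG}$ is a Gröbner basis of the ideal $I_\kk=(\cB_{\cG})\subset\kk[\lambda,\overline{\mu}]$ for the order $\<$. We cannot reuse Lemma~\ref{lem:zero_check} over $\kk$, because $\phi_L$ does not extend to $\kk[\lambda,\overline{\mu}]$. We therefore argue through Buchberger's criterion, which depends only on the polynomials and the monomial order, not on the coefficient field.

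By Proposition~\ref{prop:groebner_basis} and Theorem~\ref{thm:burchnall_chaundy}, $\cB_{\cG}$ is a Gröbner basis of $(\cB_{\cG})$ in $\Const[\lambda,\overline{\mu}]$. Buchberger's criterion then says that for every pair $R_{i,j},R_{k,l}$ the S-polynomial $S(R_{i,j},R_{k,l})$ reduces to $0$ modulo $\cB_{\cG}$ by the multivariate division algorithm. The key observation is that computing the S-polynomial and dividing it only use the field operations on the coefficients of the $R_{i,j}$, which lie in $\Const$. Each $R_{i,j}$ is monic in its leading monomial $\mu_i\mu_j$. Hence the same S-polynomial, divided in $\kk[\lambda,\overline{\mu}]\supset\Const[\lambda,\overline{\mu}]$ with the same order $\<$, produces exactly the same sequence of reduction steps and remainder $0$.

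Buchberger's criterion applied over $\kk$ then gives that $\cB_{\cG}$ is a Gröbner basis of $I_\kk=[\bc(L)]$. Equivalently, in the language of Definition~\ref{def:groebner}: $\ell(I_\kk)=\ell(\cB_{\cG})$, the ideal generated by all degree-two monomials in $\overline{\mu}$. The normal forms modulo $[\bc(L)]$ are then exactly the $\kk[\lambda]$-linear polynomials in $\overline{\mu}$.

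The main obstacle is justifying the field-independence cleanly. The point to check is that the division algorithm's output is determined by the input polynomials and the order, so the standard representations obtained over $\Const$ remain valid over $\kk$. We should also confirm that $\<$ is a well-ordering on monomials of $\kk[\lambda,\overline{\mu}]$; it is, being the same order on the same monoid of monomials.

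If we want to avoid citing Buchberger, there is an alternative. Since $\kk$ is a $\Const$-vector space, the extension $\Const[\lambda,\overline{\mu}]\to\kk[\lambda,\overline{\mu}]$ is faithfully flat and $I_\kk=I\otimes_\Const\kk$. Take a basis $\{e_s\}$ of $\kk$ over $\Const$ and write $p=\sum_s e_s p_s$ with $p_s\in I$. Then $\ell(p)$ is the leading monomial of some $p_s$, which lies in $\ell(\cB_{\cG})$. The only delicate step here is showing each $p_s\in I$, which follows from $I_\kk\cap\bigoplus_s e_s\Const[\lambda,\overline{\mu}]=\bigoplus_s e_sI$, i.e.\ from flatness.
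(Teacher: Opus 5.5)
Your main argument is the paper's own: $\cB_{\cG}$ is a Gröbner basis over $\Const$ by Proposition~\ref{prop:groebner_basis}, so its S-polynomials reduce to zero, and that reduction uses only coefficients in $\Const$. It therefore carries over verbatim to $\kk[\lambda,\overline{\mu}]$, and Buchberger's criterion over $\kk$ gives the claim. Your alternative via a $\Const$-basis $\{e_s\}$ of $\kk$ is also correct, and the step you call delicate needs no flatness: writing $p=\sum a_{i,j}R_{i,j}$ with $a_{i,j}=\sum_s e_s a_{i,j,s}$ gives $p_s=\sum a_{i,j,s}R_{i,j}\in I$ directly, and since the support of $p$ is the union of the supports of the $p_s$, $\ell(p)=\max_s \ell(p_s)\in\ell(\cB_{\cG})$.
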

\begin{proof}
    We start by noting that, since $\cB_{\cG}$ is a Gröbner basis of $\bc (L)$ w.r.t. $\<$, within $\Const[\lambda,\overline{\mu}]$, then all S-polynomials reduce to zero \cite{Cox1996}. This fact does not change when we consider $\cB_{\cG}$ as a set of generators for the extended ideal $[\bc(L)]$. Hence, $\cB_{\cG}$ is also a Gr\" obner basis for the extended ideal $[\bc(L)]$.
\end{proof}

We will prove next that the differential ring $\kk [\Gamma]$ is an integral domain, or equivalently the primality of the differential ideal $[\bc (L)]$. 
For this purpose, we will extend the weight function $w$, given before Definition~\ref{def:order}, to monomials $\cM$ in the set of variables $\{\lambda\}\cup \overline{\mu}$, defining $w(M):=\ord(\phi_L(M))$, for every $M\in \cM$. Thus
$w(\lambda) = \ord(L)$ and $w(\mu_i) = \ord(G_i)$, $i=1,\ldots , t-1$.

\begin{lemma}\label{lem-monomial}
    For every positive weight $W \in \nn$, there is at most one monomial $T_W$ in $\cM$ linear in $\overline{\mu}$ and with weight $W$.
\end{lemma}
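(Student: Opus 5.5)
A monomial in $\cM$ that is linear in $\overline{\mu}$ has one of two shapes: either $\lambda^k$ or $\lambda^k\mu_i$ for some $k\in\nn$ and $i\in\{1,\ldots,t-1\}$. Writing $\mu_0:=1$ and $G_0=1$, both shapes are covered by $\lambda^k\mu_i$ with $i\in\{0,\ldots,t-1\}$. Since $\ord$ is additive on products in the domain $\kk[\partial]$, the weight of such a monomial is
\[
w(\lambda^k\mu_i)=\ord(L^kG_i)=kn+\ord(G_i).
\]
The plan is to show that the map $(k,i)\mapsto kn+\ord(G_i)$ is injective. From that, uniqueness of $T_W$ follows at once.

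Suppose $kn+\ord(G_i)=k'n+\ord(G_j)=W$. Reducing modulo $n$ gives $[\ord(G_i)]_n=[\ord(G_j)]_n$. By Theorem~\ref{thm:goodearl}, the classes $[\ord(G_k)]_n$ for $k=0,\ldots,t-1$ are exactly the $t$ elements of $\cO$. Since $|\cO|=t$, these classes are pairwise distinct, so $i=j$. Then $kn=k'n$, and because $n\geq1$ we get $k=k'$. Hence the monomial $\lambda^k\mu_i$ is determined by $W$.

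This is the same order-separation fact used in the proof of Lemma~\ref{lem:zero_check}, so I do not expect a real obstacle. The only care needed is bookkeeping: include the case $i=0$ (pure powers of $\lambda$), and note that the weight is genuinely additive because the leading coefficients multiply to a nonzero element of $\kk$. Existence is not claimed, so nothing further is needed. For instance, $W$ may fail to lie in $\cO$ modulo $n$, or may be smaller than the minimal order $\ord(G_i)$ in its class.
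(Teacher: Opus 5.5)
Your proof is correct and follows essentially the paper's argument: both rest on the pairwise distinctness of the classes $[\ord(G_i)]_n$ given by Theorem~\ref{thm:goodearl}, after which $W$ forces the exponent of $\lambda$. The only difference is cosmetic: you absorb the case $W\equiv 0 \pmod{n}$ by setting $\mu_0=1$, $G_0=1$ and phrase the argument as injectivity of $(k,i)\mapsto kn+\ord(G_i)$, whereas the paper treats pure powers of $\lambda$ separately and also points out when no such monomial exists.
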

\begin{proof}
Assume $\ord (L)=n$.
    If $W \equiv 0 \pmod{n}$, then only the monomial $\lambda^{W/n}$ has weight $W$. Otherwise, given the $\Const [L]$-basis $\cG=\{G_0,G_1,\ldots ,G_{t-1}\}$, by Theorem~\ref{thm:goodearl}, we know that $\ord(G_i) \not\equiv \ord(G_j) \pmod{n}$ for $i \neq j$. Hence, it can only exist one index $i \in \{1,\ldots,t-1\}$ with $\ord(G_i) \equiv W \pmod{n}$. We can write
    \[W = a\ord(L) + \ord(G_i).\]
    If $a \geq 0$, then $\lambda^a \mu_i$ has weight $W$. If $a < 0$, then there is no monomial with weight $W$.
\end{proof}

Recall that the polynomials of $\cB_{\cG}$ have a very specific shape
\begin{equation}
    R_{i,j}=\mu_i \mu_j+ L_{i,j}
\end{equation}
where $L_{i,j}$  is linear in $\overline{\mu}$.
Due to the monomial order $\<$, reducing any polynomial by $\cB_{\cG}$ is equivalent to replacing (in any order) the products $\mu_i\mu_j$ by the corresponding polynomial $L_{i,j}$. By construction, the polynomial $L_{i,j}$ has exactly the same weight as $\mu_i\mu_j$, so when performing the substitution we can only influence terms of lower or equal weight than $\mu_i\mu_j$.

Given a polynomial $f$ in $\kk [\lambda, \overline{\mu}]$, 
its normal form $\tilde{f}$ w.r.t. the Gr\" obner basis $\cB_{\cG}$ will be a linear polynomial in the variables $\overline{\mu}$. Thus every class $f+[\bc (L)]$ of the quotient ring $\kk [\Gamma]$ has a representative $\tilde{f}$ that is a linear polynomial in $\overline{\mu}$. 
Lemma \ref{lem-monomial} is particularly useful because it implies that the highest weight term of $\tilde{f}$ is unique. So when we multiply two of these representatives, there is a unique term in the product with highest weight.

\begin{theorem}\label{thm:prime}
    The quotient ring $\kk[\Gamma]$ is a differential domain.
\end{theorem}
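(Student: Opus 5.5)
We need to show that $\kk[\Gamma]$ has no zero divisors; it is already a differential ring with the derivation $\tilde{\partial}$, since $[\bc(L)]$ is generated by constant-coefficient polynomials. The plan is a leading-term argument with respect to the weight $w$ extended to $\cM$, using the normal forms given by the Gröbner basis $\cB_{\cG}$ in $\kk[\lambda,\overline{\mu}]$.

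\emph{Step 1: reduce to normal forms.} Take two nonzero classes in $\kk[\Gamma]$ and represent them by their normal forms $\tilde f,\tilde g$ with respect to $\cB_{\cG}$. By the preceding proposition these are nonzero, linear in $\overline{\mu}$, and uniquely determined. By Lemma~\ref{lem-monomial}, each monomial $\lambda^a\mu_i$ (with $\mu_0:=1$) has a distinct weight $a n+\ord(G_i)$. Hence $\tilde f$ has a unique monomial $T_{W_1}$ of maximal weight $W_1$, with coefficient $c_1\in\kk^*$. Likewise $\tilde g$ has top monomial $T_{W_2}$ with coefficient $c_2\in\kk^*$.

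\emph{Step 2: track the top weight through the product and its reduction.} The product $\tilde f\tilde g$ has all monomials of weight at most $W_1+W_2$. The only monomial of weight exactly $W_1+W_2$ is $T_{W_1}T_{W_2}$, with coefficient $c_1c_2\neq 0$. We then reduce by $\cB_{\cG}$, replacing each $\mu_i\mu_j$ by $-L_{i,j}$, and each such replacement preserves or lowers weight. All other monomials have weight strictly less than $W_1+W_2$, so their reductions contribute only monomials of weight $<W_1+W_2$. The top monomial reduces to a polynomial $N$ with coefficients in $\Const$, namely the normal form of the constant-coefficient monomial $T_{W_1}T_{W_2}$. Its image under $\phi_L$ is $L^{a+b}G_iG_j$, a nonzero operator of order exactly $W_1+W_2$. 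Since $N$ is linear in $\overline{\mu}$ and $\phi_L(N)$ has order $W_1+W_2$, the argument in the proof of Lemma~\ref{lem:zero_check} shows that $N$ contains, with nonzero constant coefficient, the unique monomial $T_{W_1+W_2}$ of Lemma~\ref{lem-monomial}.

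\emph{Step 3: conclude.} The normal form of $\tilde f\tilde g$ therefore contains $T_{W_1+W_2}$ with coefficient $c_1c_2\cdot c\neq0$, where $c\in\Const^*$. No other contribution can cancel it, because all other contributions have strictly smaller weight. So the normal form is nonzero, and $\tilde f\tilde g\notin[\bc(L)]$. Hence $[\bc(L)]$ is prime and $\kk[\Gamma]$ is a domain.

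The main obstacle is Step 2. One must justify that reduction never raises weight, which holds because each $L_{i,j}$ has weight at most $w(\mu_i\mu_j)$, as it comes from the order of $G_iG_j$. One must also check that the top-weight part of the reduction of $T_{W_1}T_{W_2}$ survives with a nonzero coefficient. I would handle the latter by working over $\Const$ and comparing orders via $\phi_L$, since that map is available for constant-coefficient polynomials.
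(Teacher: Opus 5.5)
Your proposal is correct and follows the paper's own argument. You pass to linear normal forms, use Lemma~\ref{lem-monomial} to get a unique top-weight term in each factor, and show that the top-weight term of the product survives reduction. Your use of $\kk$-linearity of normal forms, together with $\phi_L$ and the proof of Lemma~\ref{lem:zero_check} applied to the constant-coefficient monomial $T_{W_1}T_{W_2}$, makes explicit the step the paper only asserts (that the substituted term ``cannot be canceled''). It also handles the cases $i_1=0$ or $i_2=0$ uniformly.
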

\begin{proof}
    Let us consider two non-zero elements in the quotient ring $\kk[\Gamma]$  
    \begin{equation}
        F_1=f_1+[\bc(L)],\,\,\, F_2=f_2+[\bc(L)],
    \end{equation}
    with representatives
    $f_1,f_2\in \kk[\lambda,\overline{\mu}]$ that are already reduced w.r.t. $[\bc(L)]$. We know that $f_1,f_2$ are linear polynomials in  $\overline{\mu}$, so their highest weight terms are of the form $\lambda^{a_1}\mu_{i_1}$ and $\lambda^{a_2}\mu_{i_2}$. We allow $i_1 = 0$ or $i_2 = 0$, considering $\mu_0 = 1$ when this term is a monomial in $\lambda$.

    The product $F_1F_2$ has representative $f_1 f_2$ whose highest term is of the form $\lambda^{a_1+a_2}\mu_{i_1}\mu_{i_2}$. If $i_1 = 0$ or $i_2 = 0$, then there is no reduction that can cancel this term, meaning that $F_1F_2 \neq 0$. If both $i_1$ and $i_2$ are distinct from zero, then we will replace $\mu_{i_1}\mu_{i_2}$ by $ L_{i_1,i_2}$ creating a highest weight term of the same weight as $\lambda^{a_1+a_2}\mu_{i_1}\mu_{i_2}$ but linear in $\overline{\mu}$. This new term cannot be canceled by any previous nor further substitutions, hence $f_1f_2$ does not reduce to the zero polynomial and $F_1F_2 \neq 0$.

    This shows that the product of two non-zero elements in the quotient is always non-zero, so $\kk[\Gamma]$ is an integral domain.
\end{proof}

\begin{corollary}
    The differential ideal $[\bc (L)]$ is a prime ideal in $\kk [\lambda,\overline{\mu}]$. 
\end{corollary}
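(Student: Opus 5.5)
The corollary follows from Theorem~\ref{thm:prime} by the standard correspondence between quotient rings and ideals: an ideal $I$ of a commutative ring $R$ is prime exactly when $I\neq R$ and $R/I$ is an integral domain. Take $R=\kk[\lambda,\overline{\mu}]$ and $I=[\bc(L)]$. Theorem~\ref{thm:prime} states that $\kk[\Gamma]=R/I$ is a domain, so primality comes down to two points: that $I$ is proper, and that $I$ really is a differential ideal.

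\emph{Properness.} I will show that $1\notin[\bc(L)]$. The preceding proposition says that $\cB_{\cG}$ is a Gröbner basis of $[\bc(L)]$ in $\kk[\lambda,\overline{\mu}]$ with respect to $\<$. Every leading monomial $\ell(R_{i,j})=\mu_i\mu_j$ has degree two in $\overline{\mu}$, so none of them divides the monomial $1$. Hence $1$ is already in normal form and is nonzero, and the Gröbner-basis membership criterion gives $1\notin[\bc(L)]$. This step uses $t>1$, i.e.\ that $\centr(L)$ is non-trivial, as is assumed throughout this section. If $t=1$, then $\cB_{\cG}$ is empty, the ideal is $(0)$, and it is trivially prime because $\kk[\lambda]$ is a domain.

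\emph{Differential closure.} The generators $R_{i,j}$ have coefficients in $\Const$ and the variables are constants, so $\partial(R_{i,j})=0$. It follows that for any $\sum h_{i,j}R_{i,j}$ we have $\partial\big(\sum h_{i,j}R_{i,j}\big)=\sum \partial(h_{i,j})R_{i,j}$, which again lies in the ideal. Thus the algebraic ideal $(R_{i,j})$ is closed under $\partial$ and coincides with $[\bc(L)]$, as the paper already observes.

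Putting these together, a nonzero product $F_1F_2$ in $R/I$ with both factors nonzero is exactly the statement that $ab\in I$ implies $a\in I$ or $b\in I$. So $[\bc(L)]$ is a prime differential ideal. There is no real obstacle here: the substantive work was done in Theorem~\ref{thm:prime}, and the only thing needing care is properness, which the Gröbner basis argument settles.
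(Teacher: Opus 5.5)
Your proposal is correct and follows the paper's route: the corollary is read off from Theorem~\ref{thm:prime} using the fact that an ideal is prime exactly when it is proper and the quotient is a domain. Your two extra checks are sound details that the paper leaves implicit: properness of $[\bc(L)]$ (the normal form of $1$ with respect to $\cB_{\cG}$ is $1 \neq 0$), and closure of $(R_{i,j})$ under $\partial$.
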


We can now consider the fraction field $\kk (\Gamma)$ of the domain $\kk [\Gamma]$ with the natural extended derivation $\tilde{\partial}$. This is the new differential field of coefficients $(\kk (\Gamma),\tilde{\partial})$ to study the spectral problem $(L-\lambda)(y)=0$. 

For a Schr\" odinger operators, ordinary differential operators of order $2$ in normal form, \emph{spectral Picard-Vessiot fields} were defined in \cite{MRZ2}. These are differential extensions of $\kk (\Gamma)$ by a fundamental system of solutions of $(L-\lambda)(y)=0$, without extending the field of constants. In the case of Schr\" odinger operators, the field of constants of $\kk (\Gamma)$ was shown to be $\Const (\Gamma)$.  
We are now ready to study  Picard-Vessiot extensions for algebro-geometric ODOs of arbitrary order. 

\section{Conclusions and further work}\label{sec:problems}

In this paper we studied the structure of Burchnall-Chaundy ideals, that define the relations between the elements in the centralizer $\centr(L)$ of an ordinary differential operator $L$, with coefficients in an arbitrary differential field $\kk$. Our results are based on the $\Const[L]$-module structure of centralizers, which was previously studied by K. Goodearl ~\cite{Goodearl1983} and many other authors, and the recently developed symbolic algorithms for their computation ~\cite{JimenezPastor2025b}. 

We designed an algorithm to compute the set of generators of a BC-ideal in a polynomial ring $\Const [\lambda, \overline{\mu}]$ with constant coefficients.  Moreover, we showed that this set of generators is a Gr\" obner basis, for a product ordering $\<$ on monomials in the variables $\lambda, \overline{\mu}$, after assigning weights to the chosen variables $\overline{\mu}$. In this manner, we achieve the effective computation of the spectral curve $\Spec (\cA)$ of a commutative ring of ODOs and, in particular, the spectral curve $\Gamma$ of the centralizer $\centr(L)$. 

Furthermore, by means of these Gr\" obner bases, the normal form of a polynomial with respect to the BC-ideal is a linear polynomial in the set of variables $\overline{\mu}$.  This allows directly proving important results to develop the Picard-Vessiot theory for spectral problems $(L-\lambda)(y)=0$. We showed in this paper that the prime ideal $\bc (L)$ generates a  prime differential ideal $[\bc(L)]$ in $\kk[\lambda, \overline{\mu}]$, allowing to consider a differential domain $\kk [\Gamma]$ and its fraction field $\kk (\Gamma)$ that contain $L-\lambda$. Over this new field of coefficients $\kk (\Gamma)$, we will be able to compute right factors of $L-\lambda$ and develop effective algorithms to compute solutions of $(L-\lambda)(y)=0$, to compute the famous Baker-Akheizer function \cite{K78, Schilling}. 

These results are not only theoretical, the algorithms are implemented in the open-source package \texttt{dalgebra} of SageMath. This package is a software under active development that can be freely accessed from 
\begin{center}\url{https://github.com/Antonio-JP/dalgebra}\end{center}
In version \texttt{0.0.8}, we added a Jupyter notebook within the repository that includes all the examples from this paper with the extended computations that can be seen as a showcase on how to use the software. The notebook can be accessed from
\begin{center}{\url{https://github.com/Antonio-JP/dalgebra/blob/ISSAC-2026/notebooks/ISSAC26_PaperExamples.ipynb}}\end{center}

\section*{Acknowledgements}
All authors are partially supported by the grant PID2021-124473NB-I00, “Algorithmic Differential Algebra and Integrability” (ADAI) from the Spanish MCIN/AEI /10.13039/501100011033 and by FEDER, UE. 

The authors would like to thank Carlos D'Andrea and Carlos Arreche for the very insighful discussions on these topics during their visits to Madrid.
\vspace{-1em}

\bibliographystyle{abbrv}
\bibliography{main}

\end{document}